\newtheorem{theorem}{Theorem}[section]
\newtheorem{lemma}[theorem]{Lemma}
\theoremstyle{definition}
\newtheorem{definition}[theorem]{Definition}
\newtheorem{remark}{Remark}
\newtheorem{example}{Example}
\title[A necessary condition of Pontryagin type]{A necessary condition 
of Pontryagin type for fuzzy fractional optimal control problems}
\author[O. S. Fard, J. Soolaki and D. F. M. Torres]{}
\subjclass{Primary: 26A33, 93C42; Secondary: 49K05.}
\keywords{Fractional calculus, fuzzy control systems, fuzzy optimal control, 
fuzzy fractional Hamiltonian function, Pontryagin maximum principle.}
\email{osfard@du.ac.ir}
\email{javad.soolaki@gmail.com}
\email{delfim@ua.pt}
\thanks{This work is part of second author's PhD project.
It was partially supported by Damghan University, Iran;
and CIDMA--FCT, Portugal, under project UID/MAT/04106/2013.}
\thanks{$^*$Corresponding author: Delfim F. M. Torres (delfim@ua.pt)}
\begin{document}

\maketitle

\centerline{\scshape Omid S. Fard and Javad Soolaki}
\medskip
{\footnotesize
 \centerline{Department of Applied  Mathematics}
 \centerline{School of  Mathematics and Computer Science}
 \centerline{Damghan University, Damghan, Iran}} 
 
\medskip

\centerline{\scshape Delfim F. M. Torres$^*$}
\medskip
{\footnotesize
 \centerline{Center for Research \& Development in Mathematics and Applications (CIDMA)}
 \centerline{Department of Mathematics, University of Aveiro}
 \centerline{3810--193 Aveiro, Portugal}}

\bigskip


\begin{abstract}
We prove necessary optimality conditions of Pontryagin type
for a class of fuzzy fractional optimal control problems
with the fuzzy fractional derivative described in the Caputo 
sense. The new results are illustrated by computing the extremals
of three fuzzy optimal control systems, which improve recent 
results of Najariyan and Farahi.
\end{abstract}


\section{Introduction}

Optimal control problems are usually solved with the help of the famous
Pontryagin Maximum Principle (PMP), which provides a generalization
of the classical Euler--Lagrange and Weierstrass necessary optimality
conditions of the calculus of variations and is one of the central
results of the mathematics of the XX century \cite{Pinch,Pontryagin}.
On the other hand, fractional (noninteger order) derivatives play 
an increasing role in mathematics, physics and engineering
\cite{Hilfer,Kilbas,Miller,Podlubny}. The two subjects have recently
been put together and a theory of the calculus of variations and optimal control
that deals with more general systems containing noninteger order derivatives
is now available: see the books \cite{book:CMFCV,book:IFCV,book:AMFCV}.
In particular, the fractional Hamiltonian perspective is a very active 
subject, being investigated in a series of publications: see, e.g.,
\cite{Agrawal03,Torres4,MR2433010,Baleanu3,Baleanu4,MR3124694,Tarasov,MyID:295}.

Uncertainty is inherent to most real world systems and 
fuzziness is a kind of uncertainty very common 
in real word problems \cite{MR2014505}. 
In recent years, the notion of fuzzy set has been widely 
spread to various research areas, such as linear programming,
optimization, differential equations and even fractional differential equations
\cite{Salahshour}. Thus, the study of a fuzzy optimal control theory 
forms a suitable setting for the mathematical modelling of real world problems 
in which uncertainties or vagueness pervade \cite{MR3442696}.
In the past few decades, the interest in the field  of fuzzy optimal control 
has increased and fuzzy optimal control problems have attracted a great deal 
of attention. A large number of existing schemes of fuzzy optimal control 
for nonlinear systems are proposed based on the framework of the 
Takagi--Sugeno (T-S) fuzzy model originated from fuzzy identification \cite{Takagi}.
Moreover, for most of the T-S modelled nonlinear systems, fuzzy control design 
is carried out by the aid of the parallel distributed compensation (PDC) 
approach \cite{Yang}. However, it is still possible to enumerate all works 
that establish necessary optimality conditions for the fuzzy calculus 
of variations or fuzzy optimal control: see
\cite{fard2,Fard,Farhadinia,Farhadinia1,farahi,farahi1,Fard1,soolaki}. 

In \cite{farahi,farahi1}, Najariyan and Farahi obtain necessary optimality 
conditions of Pontryagin type for a very special case of fuzzy optimal 
control problems, using $\alpha$-cuts and presentation of numbers 
in a more compact form by moving to the field of complex numbers. 
The authors of \cite{farahi,farahi1} study the following fuzzy 
optimal control problem subject to a time-invariant linear control system:
\begin{equation*}
\begin{gathered}
\tilde{\mathcal{J}}(\tilde{u})
=\int_{a}^{b}\tilde{L}\left(\tilde{u}(t), t\right) dt \longrightarrow \min,\\
\dot{\tilde{x}}(t)=\tilde{A}\odot\tilde{x}(t)+\tilde{C}\odot\tilde{u}(t),\\
\tilde{x}(a)=\tilde{x}_a,\quad \tilde{x}(b)=\tilde{x}_b.
\end{gathered}
\end{equation*}
In \cite{Farhadinia1}, Farhadinia applies the fuzzy variational approach  
of \cite{Farhadinia} to fuzzy optimal control problems and derives 
necessary optimality conditions for fuzzy optimal control problems 
that depend on the Buckley and Feuring derivative (a Hukuhara derivative) 
\cite{Buckley}. In \cite{fard2,Fard}, the generalized Hukuhara derivative 
is used for a fuzzy-number-valued function, leading to solutions with 
decreasing length on their supports. Salahshour et al. \cite{Salahshour} 
and Mazandarani and Kamyad \cite{Mazandarani} proposed, respectively, 
the concepts of Riemann--Liouville and Caputo fuzzy fractional differentiability, 
based on the Hukuhara difference, which strongly generalizes 
fuzzy differentiability. 
In \cite{Allahviranloo2,Allahviranloo1},
the generalized Hukuhara fractional Riemann--Liouville 
and Caputo concepts for fuzzy-valued functions are further investigated. 
For a Hukuhara approach valid on arbitrary nonempty closed sets
of the real numbers (time scales) see \cite{MyID:341}.
In \cite{fard2}, Fard and Salehi investigate fuzzy fractional Euler--Lagrange 
equations for fuzzy fractional variational problems defined via generalized 
fuzzy fractional Caputo type derivatives. In \cite{soolaki},  
Soolaki et al. present necessary optimality conditions of Euler--Lagrange type
for variational problems with natural boundary conditions and problems 
with holonomic constraints, where the fuzzy fractional derivative 
is described in a combined sense. Here, using the PMP and a novel 
form of the Hamiltonian approach, we achieve fuzzy solutions (state and control) 
by solving an appropriate system of differential equations. The proposed method 
is not limited to just optimal fuzzy linear time-invariant controlled systems, 
which were previously studied in \cite{farahi,farahi1} for integer-order problems. 
Since the Buckley and Feuring concept of differentiability \cite{Buckley} 
or even the Hukuhara notion of differentiability are not able 
to guarantee that the obtained solutions are fuzzy functions, 
in the present work we focus on the generalized Hukuhara differentiation. 
If the order of the derivatives appearing in the formulation of our problems 
approach integer values, then one obtains via our results the extremals 
of fuzzy optimal control problems investigated 
in \cite{Farhadinia1,farahi,farahi1}. 

The paper is organized as follows. Section~\ref{1} introduces necessary notations
on fuzzy numbers and differentiability and integrability of fuzzy mappings.
The notion of Caputo generalized Hukuhara fuzzy fractional derivative is recalled 
in Section~\ref{2}. In Section~\ref{3} we establish our main result,
Theorem~\ref{the001}, that provides Pontryagin conditions 
for fuzzy fractional optimal control problems. In Section~\ref{4} 
we consider three problems, illustrating the proposed method. 
In particular, it is shown that the candidates to minimizers
given in \cite[Example~4.2]{farahi} and \cite[Example~3]{farahi2} 
are not solutions to the considered problems. We end with Section~\ref{5} 
of conclusions and future work.


\section{Preliminaries}
\label{1}

Let us denote by $\mathbb{R}_{f}$ the class of fuzzy numbers, i.e., normal, convex,
upper semicontinuous and compactly supported fuzzy subsets of the real numbers.
For $0 < r \leq 1$, let $[\tilde{u}]^{r}=\{{x} \in \mathbb{R};\tilde{u}(x)\geq r\}$
and $[\tilde{u}]^{0}=\overline{\left\{{x} \in \mathbb{R};\tilde{u}(x)\geq 0\right\}}$.
Then, it is well known that $[\tilde{u}]^{r}$ is a bounded closed interval
for any $r\in[0,1]$.

\begin{lemma}[See Theorem~1.1 of \cite{MR0825618} and Lemma~2.1 of \cite{MR2609258}]
\label{lem1}
If $\underline{a}^r:[0,1] \rightarrow \mathbb{R}$ and
$\overline{a}^r:[0,1] \rightarrow \mathbb{R}$
satisfy the conditions
\begin{enumerate}
\item[(i)]  $\underline{a}^r:[0,1] \rightarrow \mathbb{R} $ is a bounded nondecreasing function,
\item[(ii)]$\overline{a}^r:[0,1] \rightarrow \mathbb{R} $ is a bounded nonincreasing function,
\item[(iii)]  $\underline{a}^1\leq \overline{a}^1$,
\item[(iv)] for $0<k\leq 1$, $\lim_{r\rightarrow k^{-}}\underline{a}^r =\underline{a}^k$
and $\lim_{r\rightarrow k^{-}}\overline{a}^r =\overline{a}^k$,
\item[(v)] $\lim_{r\rightarrow 0^{+}}\underline{a}^r =\underline{a}^0$
and $\lim_{r\rightarrow 0^{+}}\overline{a}^r =\overline{a}^0$,
\end{enumerate}
then $\tilde{a}:\mathbb{R}\rightarrow [0,1]$, characterized by
$\tilde{a}(t)=\sup\{r|\underline{a}^r\leq t \leq \overline{a}^r\}$,
is a fuzzy number with $[\tilde{a}]^{r}=[\underline{a} ^r,\overline{a}^r]$.
The converse is also true: if $\tilde{a}(t)=\sup\{r|\underline{a}^r\leq t\leq  \overline{a}^r\}$ is
a fuzzy number with parametrization given by $ [\tilde{a}]^{r}=[\underline{a} ^r,\overline{a}^r]$,
then functions $\underline{a}^r$ and $\overline{a}^r$ satisfy conditions (i)--(v).
\end{lemma}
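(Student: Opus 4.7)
The plan is to prove the two directions separately, treating the given functions $\underline{a}^r,\overline{a}^r$ and the candidate fuzzy number $\tilde{a}$ as dual descriptions of the same level-set structure.

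For the forward direction, I would first verify that the definition $\tilde{a}(t)=\sup\{r\mid \underline{a}^r\leq t\leq \overline{a}^r\}$ produces a function into $[0,1]$, and then identify the level sets. The key identity to establish is $[\tilde{a}]^r=[\underline{a}^r,\overline{a}^r]$ for every $r\in(0,1]$. One inclusion is immediate: if $t\in[\underline{a}^r,\overline{a}^r]$ then $r$ belongs to the set over which the supremum is taken, so $\tilde{a}(t)\geq r$. For the reverse inclusion, suppose $\tilde{a}(t)\geq r$. Then for each $s<r$ the supremum definition yields some $s'\in(s,r]$ with $\underline{a}^{s'}\leq t\leq\overline{a}^{s'}$, and the monotonicity conditions (i)--(ii) give $\underline{a}^s\leq t\leq\overline{a}^s$. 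Letting $s\to r^{-}$ and invoking the left-continuity condition (iv), one obtains $\underline{a}^r\leq t\leq\overline{a}^r$. The properties of a fuzzy number then follow: normality from (iii) (any $t\in[\underline{a}^1,\overline{a}^1]$ satisfies $\tilde{a}(t)=1$), convexity of level sets because each $[\underline{a}^r,\overline{a}^r]$ is an interval, and compact support from condition (v) together with boundedness in (i)--(ii), which forces $\mathrm{supp}(\tilde{a})\subseteq[\underline{a}^0,\overline{a}^0]$.

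For upper semicontinuity, I would show that $\{t:\tilde{a}(t)\geq r\}=[\underline{a}^r,\overline{a}^r]$ is closed, which is automatic once the level-set identity is in place for $r>0$; for $r=0$ the closure taken in the definition of $[\tilde{u}]^0$ plays the corresponding role. The converse direction is the easier one: starting from a fuzzy number $\tilde{a}$ with cuts $[\underline{a}^r,\overline{a}^r]$, conditions (i)--(ii) follow from the nesting $[\tilde{a}]^r\subseteq[\tilde{a}]^s$ for $s<r$, condition (iii) from nonemptiness of $[\tilde{a}]^1$, and condition (v) from compactness of the support. For condition (iv), I would use the standard fact that upper semicontinuity implies $[\tilde{a}]^k=\bigcap_{r<k}[\tilde{a}]^r$, which translates into $\underline{a}^k=\sup_{r<k}\underline{a}^r$ and $\overline{a}^k=\inf_{r<k}\overline{a}^r$, i.e., the required left-continuity at $k$.

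The main obstacle I expect is the careful handling of limits in condition (iv), both when deducing the level-set identity in the forward direction and when deducing left-continuity of the endpoints from upper semicontinuity in the converse direction. One must be vigilant about the distinction between left and right limits, since nothing is assumed about right-continuity, and about the case $r=0$, where the closure in the definition of $[\tilde{u}]^0$ and condition (v) interact in a slightly different manner than for $r>0$. Since this lemma is a standard result (Goetschel--Voxman), I would essentially follow the classical argument, taking care only to phrase each step so that it matches the hypotheses as stated.
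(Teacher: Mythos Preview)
The paper does not prove this lemma; it is stated with attribution to Goetschel--Voxman and Xu--Liao--Nieto and no argument is given. Your outline reproduces the standard proof from those references and is correct, so there is nothing to compare.
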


For $\tilde{u}, \tilde{v}\in\mathbb{R}_{f}$ and $\lambda \in\mathbb{R}$,
the sum $\tilde{u}+\tilde{v}$ and the product $\lambda \cdot \tilde{u}$
are defined by
$[\tilde{u}+\tilde{v}]^{r}=[\tilde{u}]^{r}+[\tilde{v}]^{r}$
and $[\lambda \cdotp \tilde{u}]^r=\lambda[\tilde{u}]^r$ for all $r\in[0,1]$,
where $[\tilde{u}]^{r}+[\tilde{v}]^{r}$ means the usual addition of two intervals
(subsets) of $\mathbb{R}$ and $\lambda[\tilde{u}]^r$ means the usual product
between a scalar and a subset of $\mathbb{R}$. The product $\tilde{u} \odot \tilde{v}$
of fuzzy numbers $\tilde{u}$ and $\tilde{v}$, is defined by
\begin{equation*}
[\tilde{u}\odot\tilde{v}]^r=[\min\{ \underline{u}^r \underline{v}^r,
\underline{u}^r \overline{v}^r, \overline{u}^r \underline{v}^r,
\overline{u}^r \overline{v}^r \},
\max \{\underline{u}^r \underline{v}^r, \underline{u}^r \overline{v}^r,
\overline{u}^r \underline{v}^r, \overline{u}^r \overline{v}^r \}].
\end{equation*}
The metric structure is given by the Hausdorff distance
$D:\mathbb{R}_{f} \times \mathbb{R}_{f} \rightarrow \mathbb{R}_{+}\cup\{0\}$,
$$
D(\tilde{u},\tilde{v})=\sup_{r\in[0,1]}
\max\{|\underline{u}^r-\underline{v}^r|,|\overline{u}^r-\overline{v}^r|\}.
$$
We say that the fuzzy number $\tilde{u}$ is triangular if
$\underline{u}^1=\overline{u}^1$, $\underline{u}^r
=\underline{u}^1-(1-r)(\underline{u}^1-\underline{u}^0)$
and $\overline{u}^r=\underline{u}^1-(1-r)(\overline{u}^0-\underline{u}^1)$.
The triangular fuzzy number $u$ is generally denoted by
$\tilde{u}=<\underline{u}^0,\underline{u}^1,\overline{u}^0>$.
We define the fuzzy zero $\tilde{0}_{x}$ as
$$
\tilde{0}_{x}=
\begin{cases}
1 & \text{ if } x=0,\\
0 & \text{ if } x\neq 0.
\end{cases}
$$

\begin{definition}[See \cite{Farhadinia}]
We say that $\tilde{f}:[a,b]\rightarrow \mathbb{R}_{f}$ is continuous
at $t\in[a,b]$, if both $\underline{f}^r(t)$ and $\overline{f}^r(t)$
are continuous functions of $t\in[a,b]$ for all $r \in [0,1]$.
\end{definition}

\begin{definition}[See \cite{Bede5}]
The generalized Hukuhara difference of two fuzzy numbers
$\tilde{x},\tilde{y}\in\mathbb{R}_{f}$ ($gH$-difference for short) is defined as follows:
\begin{equation*}
\tilde{x}\ominus_{gH}\tilde{y}=\tilde{z}
\Leftrightarrow
\tilde{x}=\tilde{y}+\tilde{z}
\text{ or }
\tilde{y}=\tilde{x}+(-1)\tilde{z}.
\end{equation*}
\end{definition}

If $\tilde{z}= \tilde{x}\ominus_{gH}\tilde{y}$ exists as a fuzzy number, then
its level cuts $[\underline{z}^r, \overline{z}^r]$ are obtained by
\begin{equation*}
\underline{z}^r=\min\{ \underline{x}^r - \underline{y}^r,
\overline{x}^r - \overline{y}^r\},
\quad \overline{z}^r
=\max\{ \underline{x}^r - \underline{y}^r,
\overline{x}^r - \overline{y}^r\}
\end{equation*}
for all $r\in[0,1]$.

\begin{definition}[See \cite{Hoa}]
\label{def02}
Let $t\in(a,b)$ and $h$ be such that $t+h\in(a,b)$.
The generalized Hukuhara derivative of a fuzzy-valued function
$\tilde{x}:(a,b) \rightarrow \mathbb{R}_{f}$ at $t$ is defined by
\begin{equation}
\label{G2}
\mathcal{D}_{gH}\tilde{x}(t)=\lim_{{h\to 0}}\frac{\tilde{x}(t+h)\ominus_{gH} \tilde{x}(t)}{h}.
\end{equation}
If $\mathcal{D}_{gH}\tilde{x}(t) \in \mathbb{R}_{f}$ satisfying \eqref{G2} exists,
then we say that $\tilde{x}$ is generalized Hukuhara differentiable
($gH$-differentiable for short) at $t$. Also, we say that $\tilde{x}$ is
$[(1)-gH]$-differentiable at $t$ (denoted by $\mathcal{D}_{1,gH} \tilde{x}$)
if $[\mathcal{D}_{gH}\tilde{x}(t)]^r = [\dot{\underline{x}}^r(t), \dot{\overline{x}}^r(t)]$,
and that $\tilde{x}$ is $[(2)-gH]$-differentiable at $t$
(denoted by $\mathcal{D}_{2,gH} \tilde{x}$) if
$$
[\mathcal{D}_{gH}\tilde{x}(t)]^r
= [\dot{\overline{x}}^r(t), \dot{\underline{x}}^r(t)],
\quad r\in[0,1].
$$
\end{definition}

If the fuzzy function $\tilde{f}(t)$ is continuous in the metric $D$,
then its definite integral exists. Furthermore,
\begin{equation*}
\left( \underline{\int_{a}^{b} \tilde{f}(t)dt} \right)^r
=\int_{a}^{b} \underline{f}^r (t)dt,
\quad \left( \overline{\int_{a}^{b} \tilde{f}(t)dt} \right)^r
=\int_{a}^{b} \overline{f}^r (t)dt.
\end{equation*}

\begin{definition}[See \cite{Farhadinia}]
\label{def1}
Let $\tilde{a},\tilde{b}\in\mathbb{R}_{f}$. We write $\tilde{a}\preceq\tilde{b}$,
if $\underline{a}^r \leq \underline{b}^r $ and $\overline{a}^r \leq \overline{b}^r$
for all $r \in [0,1]$. We also write $\tilde{a}\prec\tilde{b}$,
if $\tilde{a}\preceq\tilde{b}$ and there exists an $r'\in [0,1]$
so that $\underline{a}^{r'}  < \underline{b}^{r'}$ and
$\overline{a}^{r'}  < \overline{b}^{r'}$.  Moreover,
$\tilde{a}\approx\tilde{b}$ if $\tilde{a}\preceq\tilde{b}$
and $\tilde{a}\succeq\tilde{b}$, that is,
$[\tilde{a}]^r=[\tilde{b}]^r$ for all $r \in[0,1]$.
\end{definition}

We say that $\tilde{a},\tilde{b}\in\mathbb{R}_{f}$
are comparable if either $\tilde{a}\preceq\tilde{b}$
or $\tilde{a}\succeq\tilde{b}$; and noncomparable otherwise.


\section{The fuzzy fractional calculus}
\label{2}

The Riemann--Liouville fractional derivative has one disadvantage 
when modelling real world phenomena: the fractional derivative
of a constant is not zero. To eliminate this problem, one often 
considers fractional derivatives in the sense of Caputo. For this reason,
in our work we restrict ourselves to problems defined by generalized 
Hukuhara fractional Caputo derivatives. Analogous results are, however, 
easily obtained for generalized Hukuhara fractional derivatives 
in the Riemann--Liouville sense.

The fuzzy $gH$-fractional Caputo derivative of a fuzzy valued function 
was introduced in \cite{Allahviranloo2}. Following \cite{Allahviranloo2}, 
we denote the space of all continuous fuzzy valued functions on 
$[a,b] \subset \mathbb{R}$ by $C^F[a,b];$ the class of fuzzy functions
with continuous first derivatives on $[a,b]\subset \mathbb{R}$ by $C^{F1}[a,b]$;
and the space of all Lebesgue integrable fuzzy valued functions on the bounded
interval $[a,b]$ by $L^F[a,b]$.

\begin{definition}[See \cite{Arshad}]
Let $\tilde{f}(x) \in C^F[a,b]\cap L^F[a,b]$ be a fuzzy valued function and $\alpha >0$.
Then the Riemann--Liouville fractional integral of order $\alpha$ is  defined by
\begin{equation*}
{_aI_x^{\alpha}} \tilde{f}(x)
=\frac{1}{\Gamma(\alpha)}
\int_{a}^{x} \tilde{f}(t) (x-t)^{\alpha-1} dt,
\end{equation*}
where $\Gamma(\alpha)$ is the Gamma function and $x > a$.
\end{definition}

\begin{definition}[See \cite{Arshad}]
Let $\tilde{f}(x) \in C^F[a,b]\cap L^F[a,b]$ be a fuzzy valued function.
The fuzzy (left) Riemann--Liouville integral of  $\tilde{f}(x)$, based
on its $r$-level representation, can be expressed as follows:
\begin{equation*}
[{_aI_x^{\alpha}} \tilde{f}(x)]^r
=[{_aI_x^{\alpha}} \underline{f}^r(x),
{_aI_x^{\alpha}} \overline{f}^r(x)],
\quad 0\leq r \leq 1,
\end{equation*}
where
\begin{equation*}
\begin{split}
{_aI_x^{\alpha}} \underline{f}^r(x)
&=\frac{1}{\Gamma(\alpha)}
\int_{a}^{x} \underline{f}^r(t)(x-t)^{\alpha-1}dt,\\
{_aI_x^{\alpha}}\overline{f}^r(x)
&=\frac{1}{\Gamma(\alpha)}
\int_{a}^{x} \overline{f}^r(t)(x-t)^{\alpha-1}dt.
\end{split}
\end{equation*}
\end{definition}

Following \cite{Allahviranloo2,Hoa}, we now recall the definition
of Caputo-type fuzzy fractional derivative under the gH-difference.
The definition is similar to the concept of Caputo derivative
in the crisp case \cite{Podlubny} and gives a direct extension of
gH-differentiability to the fractional context \cite{Bede5}.

\begin{definition}[See \cite{Hoa}]
\label{G5}
Let $\tilde{x}(t) \in C^F[a,b]\cap L^F[a,b]$. The fuzzy $gH$-fractional Caputo derivative
of the fuzzy-valued function $\tilde{x}$ ($[gH]_\alpha^C$-differentiability for short)
is defined by
\begin{equation*}
^{gH-C}_{\qquad a}\mathcal{D}_t^\alpha \tilde{x}(t)
=\frac{1}{\Gamma(m-\alpha)}\int_{a}^{t}
(t-s)^{m-\alpha-1}(\mathcal{D}_{gH}^{(m)} \tilde{x})(s)ds,
\end{equation*}
where $m-1<\alpha < m$, $t>a$. If $\alpha \in(0,1)$, then
\begin{equation*}
^{gH-C}_{\qquad a}\mathcal{D}_t^\alpha \tilde{x}(t)
=\frac{1}{\Gamma(1-\alpha)}\int_{a}^{t} (t-s)^{-\alpha}(\mathcal{D}_{gH} \tilde{x})(s)ds.
\end{equation*}
\end{definition}

\begin{theorem}[See \cite{Allahviranloo2}]
\label{g2}
Let $\tilde{x}(t) \in C^F[a,b]\cap L^F[a,b]$ and
$[\tilde{x}(t)]^r=[\underline{x}^r(t),\overline{x}^r(t)]$
for $r \in [0,1]$, $t \in (a,b)$ and $\alpha\in (0,1)$.
The function $\tilde{x}(t)$ is $[gH]_{\alpha}^{C}$-differentiable if and only if
both $\underline{x}^r(t)$ and $\overline{x}^r(t)$ are Caputo fractional
differentiable functions. Furthermore,
\begin{equation*}
\left[^{gH-C}_{\qquad a}\mathcal{D}_t^\alpha \tilde{x}(t)\right]^r
=\Bigl[\min\left\{^C_aD_t^{\alpha}\underline{x}^r(t),
{^{C}_aD_t^{\alpha}}\overline{x}^r(t)\right\},
\max\left\{^{C}_aD_t^{\alpha}\underline{x}^r(t), {^{C}_aD_t^{\alpha}}\overline{x}^r(t)\right\}\Bigr],
\end{equation*}
where
\begin{gather*}
^{C}_aD_t^{\alpha}\underline{x}^r(t)
=\frac{1}{\Gamma(1-\alpha)}\int_{a}^{t}
(t-s)^{-\alpha} \frac{d}{ds}\underline{x}^r(s)ds,\\
^{C}_aD_t^{\alpha}\overline{x}^r(t)
=\frac{1}{\Gamma(1-\alpha)}
\int_{a}^{t} (t-s)^{-\alpha} \frac{d}{ds}\overline{x}^r(s)ds.
\end{gather*}
\end{theorem}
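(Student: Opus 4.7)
The plan is to reduce the claim to the behaviour of the crisp Caputo derivative by unfolding Definitions \ref{G5} and \ref{def02} and using the level-cut representation of the fuzzy Riemann--Liouville integral recalled just before the theorem.

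First, since $\alpha\in(0,1)$, Definition \ref{G5} identifies $^{gH-C}_{\qquad a}\mathcal{D}_t^\alpha\tilde{x}(t)$ with ${_aI_t^{1-\alpha}}(\mathcal{D}_{gH}\tilde{x})(t)$, the fuzzy Riemann--Liouville integral of order $1-\alpha$ applied to the $gH$-derivative. Applying the $r$-level representation of the fuzzy RL integral, I would write
$$
\left[{^{gH-C}_{\qquad a}}\mathcal{D}_t^\alpha\tilde{x}(t)\right]^r
=\left[{_aI_t^{1-\alpha}}\underline{(\mathcal{D}_{gH}\tilde{x})}^r(t),\
{_aI_t^{1-\alpha}}\overline{(\mathcal{D}_{gH}\tilde{x})}^r(t)\right],
$$
reducing the problem to identifying the endpoints of $[\mathcal{D}_{gH}\tilde{x}(s)]^r$.

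Second, by Definition \ref{def02}, the set of endpoints of $[\mathcal{D}_{gH}\tilde{x}(s)]^r$ is exactly $\{\dot{\underline{x}}^r(s),\dot{\overline{x}}^r(s)\}$, with the order determined by whether $\tilde{x}$ is $[(1)\text{-}gH]$- or $[(2)\text{-}gH]$-differentiable. I would split into the two cases on $(a,t)$: in each one, the pointwise ordering between $\dot{\underline{x}}^r(s)$ and $\dot{\overline{x}}^r(s)$ is globally fixed, and since the kernel $(t-s)^{-\alpha}$ is nonnegative the crisp RL integration preserves that ordering. The resulting integrals are, by definition, precisely ${^C_aD_t^\alpha}\underline{x}^r(t)$ and ${^C_aD_t^\alpha}\overline{x}^r(t)$. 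Hence in either case the endpoints coincide with $\min$ and $\max$ of these two crisp Caputo derivatives, which is the claimed formula.

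Finally, for the equivalence: the forward direction follows because well-definedness of the fuzzy fractional integral in Definition \ref{G5} forces Lebesgue integrability of $\dot{\underline{x}}^r$ and $\dot{\overline{x}}^r$ against $(t-s)^{-\alpha}$, which is Caputo differentiability of $\underline{x}^r$ and $\overline{x}^r$. For the converse, assuming both crisp Caputo derivatives exist, I would use Lemma \ref{lem1} to verify that as $r$ varies in $[0,1]$ the intervals with endpoints $\min\{{^C_aD_t^\alpha}\underline{x}^r(t),{^C_aD_t^\alpha}\overline{x}^r(t)\}$ and the corresponding $\max$ satisfy the monotonicity and continuity conditions (i)--(v), hence define a fuzzy number pointwise in $t$. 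The main obstacle will be the commutation of $\min$/$\max$ with the fractional integration performed in the second paragraph: these do not commute in general, but they do here because the $gH$-type pins down a fixed global ordering between $\dot{\underline{x}}^r$ and $\dot{\overline{x}}^r$ on $(a,t)$; if the type were allowed to switch inside $(a,t)$, I would need to partition the interval into maximal sub-intervals of constant type and argue additively that the two resulting sums still realize the $\min$/$\max$ of the crisp Caputo derivatives at the evaluation point.
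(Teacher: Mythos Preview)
The paper does not give its own proof of this statement: Theorem~\ref{g2} is quoted verbatim from \cite{Allahviranloo2} and is used in the paper purely as background for the definitions that follow. There is therefore nothing in the paper to compare your proposal against.

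That said, your sketch follows exactly the natural unwinding of Definitions~\ref{G5} and~\ref{def02} and the level-cut representation of the fuzzy Riemann--Liouville integral, which is also the route taken in \cite{Allahviranloo2}. The one point you correctly flag as delicate --- that the $\min$/$\max$ commute with the fractional integration only because the $gH$-type fixes a global ordering of $\dot{\underline{x}}^r$ and $\dot{\overline{x}}^r$ on $(a,t)$ --- is indeed the crux; the original source handles it by working under a standing hypothesis of a single $gH$-type (no switching), so your partition-into-subintervals fallback would be an extension rather than part of the cited proof. Your converse via Lemma~\ref{lem1} is the right idea but would need the monotonicity of $r\mapsto {^C_aD_t^\alpha}\underline{x}^r(t)$ and $r\mapsto {^C_aD_t^\alpha}\overline{x}^r(t)$ spelled out, which follows from linearity of the Caputo operator and the monotonicity of $r\mapsto\underline{x}^r$, $r\mapsto\overline{x}^r$.
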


\begin{definition}[See \cite{Allahviranloo2}]
Let $\alpha\in[0,1]$ and $\tilde{x}:[a,b]\rightarrow \mathbb{R}_f$ be
$[gH]_{\alpha}^{C}$-differentiable at $t \in[a,b]$. We say that
$\tilde{x}$ is $[(1)-gH]_{\alpha}^{C}$-differentiable at $t \in[a,b]$ if
\begin{equation*}
[^{gH-C}_{\qquad a}\mathcal{D}_t^{\alpha}\tilde{x}(t)]^r
=\left[^{C}_aD_t^{\alpha}\underline{x}^r(t),
{^{C}_aD_t^{\alpha}}\overline{x}^r(t)\right],
\quad 0\leq r \leq 1, 
\end{equation*}
and that $\tilde{x}$ is
$[(2)-gH]_{\alpha}^{C}$-differentiable at $t $ if
\begin{equation*}
[^{gH-C}_{\qquad a}\mathcal{D}_t^{\alpha}\tilde{x}(t)]^r
=\left[^{C}_aD_t^{\alpha}\overline{x}^r(t),
{^{C}_aD_t^{\alpha}}\underline{x}^r(t)\right],
\quad 0\leq r \leq 1.
\end{equation*}
\end{definition}

\begin{remark}
We use the notation $^{gH-C}_{\qquad a}\mathcal{D}{_{it}^{\beta}} \tilde{x}$
when the fuzzy-valued function $\tilde{x}$ is $[(i)-gH]_{\alpha}^{C}$-differentiable
with respect to the independent variable $t$, $i \in \{1,2\}$.
\end{remark}

The definitions for the right fuzzy fractional operators
$_xI_b^\alpha$, $^{C}_tD_b^{\alpha}$ and
$^{gH-C}_{\qquad t}\mathcal{D}_b^{\alpha}$
of order $\alpha$, are completely analogous.


\section{Optimality of fuzzy fractional optimal control problems}
\label{3}

The fuzzy fractional optimal control problem in the sense of Caputo 
is introduced, without loss of generality, in Lagrange form:
\begin{equation}
\label{f1}
\begin{gathered}
\tilde{\mathcal{J}}(\tilde{x},\tilde{u})=\int_{a}^{b}\tilde{L}(\tilde{x}(t),
\tilde{u}(t), t) dt \longrightarrow \min, \\
^{gH-C}_{\qquad a}\mathcal{D}{_{it}^{\beta}} \tilde{x}(t)
=\tilde{\varphi}(\tilde{x}(t), \tilde{u}(t), t),
\quad i = 1, 2, \\
\tilde{x}(a)=\tilde{x}_a, \quad \tilde{x}(b)=\tilde{x}_b,
\end{gathered}
\end{equation}
where $\tilde{x}:[a,b] \rightarrow \mathbb{R}_F^n$ satisfies appropriate 
boundary conditions, $\underline{u}^r(t)$ and $\overline{u}^r(t)$ 
are piecewise continuous, and $\beta \in (0,1)$. The Lagrangian 
$\tilde{L}:\mathbb{R}_F^n \times \mathbb{R}_F^m \times [a,b]
\rightarrow \mathbb{R}_F$ and the velocity vector $\tilde{\varphi}
:\mathbb{R}_F^n \times \mathbb{R}_F^m \times [a,b] \rightarrow \mathbb{R}_F^n$
are assumed to be functions of class $C^{F1}$ with respect to all their
arguments and
\begin{equation*}
^{gH-C}_{\qquad a}\mathcal{D}{_{it}^{\beta}}\tilde{x}(t)
=\frac{1}{\Gamma(1-\beta)}\int_{a}^{t} 
(t-\tau)^{-\beta} (\mathcal{D}_{i,gH} \tilde{x})(\tau)d\tau,
\quad i=1,2.
\end{equation*}
We say that an admissible fuzzy curve $(\tilde{x}^*,\tilde{u}^*)$
is solution of problem \eqref{f1}, if for all admissible curve 
$(\tilde{x},\tilde{u})$ of problem \eqref{f1},
$$
\tilde{J}(\tilde{x}^*,\tilde{u}^*)\preceq\tilde{J}(\tilde{x},\tilde{u}).
$$
It follows, from the definition of partial ordering given 
in Definition~\ref{def1}, that the inequality 
$\tilde{J}(\tilde{x}^*,\tilde{u}^*)\preceq\tilde{J}(\tilde{x},\tilde{u})$
holds if and only if 
$$
\underline{J}^r(\underline{x}^{*r},\overline{x}^{*r},
\underline{u}^{*r},\overline{u}^{*r})\leq\underline{J}^r(\underline{x}^{r},
\overline{x}^{r},\underline{u}^{r},\overline{u}^{r})
$$ 
and 
$$
\overline{J}^r(\underline{x}^{*r},\overline{x}^{*r},\underline{u}^{*r},
\overline{u}^{*r})\leq\overline{J}^r(\underline{x}^{r},\overline{x}^{r},
\underline{u}^{r},\overline{u}^{r})
$$ 
for all $r\in[0,1]$, where the $r$-level set of fuzzy curves  
$\tilde{x}^*,\tilde{u}^*,\tilde{x}$ and $\tilde{u}$ are
$$
[\tilde{x}^*]^r=[\underline{x}^{*r},\overline{x}^{*r}],
\quad
[\tilde{u}^*]^r=[\underline{u}^{*r},\overline{u}^{*r}],
\quad [\tilde{x}]^r=[\underline{x}^{r},\overline{x}^{r}],
\quad [\tilde{u}]^r=[\underline{u}^{r},\overline{u}^{r}],
$$ 
respectively.

\begin{remark}
\label{remark1}
Choosing $\beta=1$, problem \eqref{f1} is reduced to the
fuzzy optimal control problem
\begin{equation*}
\begin{gathered}
\tilde{\mathcal{J}}(\tilde{x}(\cdot),\tilde{u}(\cdot))
=\int_{a}^{b}\tilde{L}\left(\tilde{x}(t),
\tilde{u}(t), t\right) dt \longrightarrow \min,\\
\mathcal{D}_{i,gH} \tilde{x}(t)
=\tilde{\varphi}\left(\tilde{x}(t), \tilde{u}(t), t\right)\\
\tilde{x}(a)=\tilde{x}_a,
\quad \tilde{x}(b)=\tilde{x}_b,
\end{gathered}
\end{equation*}
which is studied in \cite{Farhadinia1}.
\end{remark}

\begin{remark}
\label{remark2}
The fuzzy fractional problem of the calculus of variations 
in the sense of Caputo,
\begin{equation*}
\begin{gathered}
\tilde{\mathcal{J}}(\tilde{x}(\cdot))
=\int_{a}^{b}\tilde{L}\left(\tilde{x}(t),
{^{gH-C}_{\qquad a}D_{it}^\beta} \tilde{x}(t), t\right)dt \longrightarrow \min,\\
\tilde{x}(a)=\tilde{x}_a,\quad \tilde{x}(b)=\tilde{x}_b,
\end{gathered}
\end{equation*}
was first introduced in \cite{fard2} and is a particular 
case of our problem \eqref{f1}: one just need to choose 
$\tilde{\varphi}(\tilde{x} ,\tilde{u},t)=\tilde{u}$.
\end{remark}
 
\begin{theorem}[Pontryagin Maximum Principle for problem \eqref{f1}]
\label{the001}
Let control $\tilde{u}^*$ have the lower and upper bounds
$\underline{u}^{*r}$ and $\overline{u}^{*r}$, and $\tilde{x}^*$ be the corresponding
state with lower and upper bounds $\underline{x}^{*r}$ and $\overline{x}^{*r}$.
If $(\tilde{x}^*,\tilde{u}^*)$ is solution to \eqref{f1}, then there
exist costate functions $p_1$ and $p_2$ such that the
quadruple $\left(\underline{x}^{*r}, \overline{x}^{*r},
\underline{u}^{*r}, \overline{u}^{*r}\right)$ satisfies
\begin{itemize}
\item the Hamiltonian adjoint system
\begin{equation*}
\displaystyle {_tD{_b^{\beta}}}p_1^r(t)
=\frac{\partial\mathcal{H}}{\partial \underline{x}^{*r}},
\quad \displaystyle {_tD{_b^{\beta}}}p_2^{r}(t) 
=\frac{\partial\mathcal{H}}{\partial \overline{x}^{*r}},
\end{equation*}
\item and the stationary conditions
\begin{equation*}
\displaystyle \frac{\partial\mathcal{H}}{\partial \underline{u}^r}=0,
\quad \displaystyle \frac{\partial\mathcal{H}}{\partial \overline{u}^r}=0,
\end{equation*}
\end{itemize}
where the partial derivatives are evaluated at
$$
\left(\underline{x}^{*r}(t), \overline{x}^{*r}(t),
\underline{u}^{*r}(t), \overline{u}^{*r}(t), p_1(t), p_2(t), t\right)
$$
with the Hamiltonian $\mathcal{H}$ defined as follows:
if $\tilde{x}^{*}$ is [(1)-gH]-differentiable, then
\begin{multline}
\label{eq:Hamilt:01}
\mathcal{H}\left(\underline{x}^r, \overline{x}^r, \underline{u}^r, 
\overline{u}^r, p_1, p_2, t\right)
=-(\underline{L}^r(\underline{x}^r, \overline{x}^r, \underline{u}^r, \overline{u}^r, t)
+\overline{L}^r(\underline{x}^r, \overline{x}^r, \underline{u}^r, \overline{u}^r, t))\\
+p_1\cdot\underline{\varphi}^r(\underline{x}^r, \overline{x}^r, \underline{u}^r,
\overline{u}^r,t)+p_2\cdot\overline{\varphi}^r(\underline{x}^r, \overline{x}^r,
\underline{u}^r, \overline{u}^r, t);
\end{multline}
if $\tilde{x}^{*}$ is [(2)-gH]-differentiable, then
\begin{multline}
\mathcal{H}(\underline{x}^r, \overline{x}^r, \underline{u}^r, 
\overline{u}^r, p_1, p_2, t)
=-(\underline{L}^r(\underline{x}^r, \overline{x}^r, 
\underline{u}^r, \overline{u}^r,t)+\overline{L}^r(\underline{x}^r, 
\overline{x}^r, \underline{u}^r, \overline{u}^r, t))\\
+ p_1\cdot\overline{\varphi}^r(\underline{x}^r, \overline{x}^r, 
\underline{u}^r, \overline{u}^r, t)+p_2\cdot\underline{\varphi}^r(\underline{x}^r,
\overline{x}^r, \underline{u}^r, \overline{u}^r, t).
\end{multline}
\end{theorem}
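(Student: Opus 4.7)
The plan is to reduce the fuzzy fractional problem \eqref{f1} to an equivalent crisp (real-valued) fractional optimal control problem in the endpoints $\underline{x}^r,\overline{x}^r$ of the $r$-cuts, and then invoke the Pontryagin Maximum Principle for crisp Caputo fractional optimal control as in \cite{Agrawal03,Torres4,MyID:295}.

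The first step is to use Theorem~\ref{g2} together with the $r$-level representation to rewrite the fuzzy state equation as a pair of coupled crisp Caputo fractional differential equations. In the $[(1)-gH]$ case the endpoints pair naturally, i.e., ${^C_a D_t^\beta}\underline{x}^r = \underline{\varphi}^r$ and ${^C_a D_t^\beta}\overline{x}^r = \overline{\varphi}^r$; in the $[(2)-gH]$ case the roles of the endpoints on the left-hand side are exchanged. This asymmetry is precisely what produces the two distinct Hamiltonians in the statement. Likewise, by Definition~\ref{def1} the fuzzy inequality $\tilde{J}(\tilde{x}^*,\tilde{u}^*)\preceq\tilde{J}(\tilde{x},\tilde{u})$ requires $\underline{J}^r$ and $\overline{J}^r$ to be minimized simultaneously at the same admissible pair, so the fuzzy problem reduces to minimizing the scalar functional $\int_a^b(\underline{L}^r+\overline{L}^r)\,dt$ subject to the coupled crisp state system and the crisp boundary conditions inherited from $\tilde{x}(a)=\tilde{x}_a$, $\tilde{x}(b)=\tilde{x}_b$.

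At this point the problem is a classical fractional Bolza problem with $2n$-dimensional state $(\underline{x}^r,\overline{x}^r)$, control $(\underline{u}^r,\overline{u}^r)$, and running cost $\underline{L}^r+\overline{L}^r$. I would introduce costate multipliers $p_1,p_2$, paired with the $\underline{x}^r$- and $\overline{x}^r$-equations in the $[(1)-gH]$ case and swapped in the $[(2)-gH]$ case, assemble the augmented functional with Hamiltonian of the form $\mathcal{H}=-(\underline{L}^r+\overline{L}^r)+p_1\cdot(\cdot)+p_2\cdot(\cdot)$, and take first variations. Using fractional integration by parts, which turns ${^C_aD_t^\beta}$ acting on the state into ${_tD_b^\beta}$ acting on the costate, the variations in $\underline{x}^r,\overline{x}^r$ yield the adjoint equations in the statement (the boundary terms vanish because the endpoints of $\tilde{x}$ are prescribed), while the variations in $\underline{u}^r,\overline{u}^r$ yield the two stationarity conditions.

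The main obstacle is bookkeeping across the $gH$-differentiability case split: the swap between $\underline{\varphi}^r$ and $\overline{\varphi}^r$ in the two Hamiltonians corresponds exactly to the swap of endpoints on the left-hand side of the state equation guaranteed by Theorem~\ref{g2}, and one must verify that the costate attached to each scalar equation ends up multiplying the correct velocity component. The remaining ingredients (the fractional integration by parts formula, smoothness needed to differentiate under the integral, and the fact that the optimal pair simultaneously minimizes the constituent scalar costs) are standard and are supplied by the $C^{F1}$ hypotheses on $\tilde{L}$ and $\tilde{\varphi}$ together with Definition~\ref{def1}.
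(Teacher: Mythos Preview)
Your proposal is correct and follows essentially the same route as the paper: reduce to $r$-level endpoints via Theorem~\ref{g2} and Definition~\ref{def1}, adjoin the two crisp fractional constraints with multipliers $p_1,p_2$, take first variations, and use fractional integration by parts so that ${^C_aD_t^\beta}$ on the state becomes ${_tD_b^\beta}$ on the costate, with the $[(1)\text{-}gH]$/$[(2)\text{-}gH]$ split producing the two Hamiltonians. The only cosmetic difference is ordering: the paper keeps $\underline{\delta\mathcal{J}}^r=0$ and $\overline{\delta\mathcal{J}}^r=0$ separate, adjoins one constraint to each, and then sums the two resulting identities, whereas you sum the costs to $\underline{L}^r+\overline{L}^r$ first and treat it as a single $2n$-dimensional crisp fractional Bolza problem; both yield the same necessary conditions.
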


\begin{proof}
Consider a variation 
$\underline{u}^{r}=\underline{u}^{*r}+\delta\underline{u}^{r}$
and a variation 
$\overline{u}^{r}=\overline{u}^{*r}+\delta\overline{u}^{r}$
of $\underline{u}^{*r}$ and $\overline{u}^{*r}$, respectively,
with corresponding state $(\underline{x}^{*r}+\delta\underline{x}^{r},
\overline{x}^{*r}+\delta\overline{x}^{r})$. The consequent change 
$\tilde{\Delta}(\tilde{\mathcal{J}})$ in $\tilde{\mathcal{J}}$ is
\begin{equation*}
\tilde{\Delta}(\tilde{\mathcal{J}})
=\int_{a}^{b}\tilde{L}(\tilde{x}^*+\delta \tilde{x},\tilde{u}^*+\delta \tilde{u},t)dt
\ominus_{gH} \int_{a}^{b}\tilde{L}(\tilde{x}^*,\tilde{u}^*,t)dt.
\end{equation*}
Denote $[\Delta\tilde{\mathcal{J}}]^{r}
=[\underline{\Delta\mathcal{J}}^r,\overline{\Delta\mathcal{J}}^r]$.
Using the gH-difference, one gets
\begin{multline*}
\underline{\Delta\mathcal{J}}^r
=\min\left\{\int_{a}^{b} \underline{L}^r [\tilde{x}+\delta\tilde{x},
\tilde{u}+\delta\tilde{u}]^r dt
-\int_{a}^{b} \underline{L}^r [\tilde{x}, \tilde{u}]^r dt,\right.\\
\left.\int_{a}^{b} \overline{L}^r [\tilde{x}
+\delta\tilde{x}, \tilde{u}+\delta\tilde{u}]^r dt
-\int_{a}^{b} \overline{L}^r [\tilde{x}, \tilde{u}]^r dt\right\},
\end{multline*}
\begin{multline*}
\overline{\Delta\mathcal{J}}^r
=\max\left\{\int_{a}^{b} \underline{L}^r [\tilde{x}+\delta\tilde{x},
\tilde{u}+\delta\tilde{u}]^r dt
-\int_{a}^{b} \underline{L}^r [\tilde{x}, \tilde{u}]^r dt,\right.\\
\left.\int_{a}^{b} \overline{L}^r [\tilde{x}+\delta\tilde{x}, 
\tilde{u}+\delta\tilde{u}]^r dt
-\int_{a}^{b} \overline{L}^r [\tilde{x}, \tilde{u}]^r dt\right\},
\end{multline*}
where 
\begin{equation*}
\begin{split}
[\tilde{x}+\delta\tilde{x}, \tilde{u}+\delta\tilde{u}]^r
&=(\underline{x}^{*r}+\delta\underline{x}^{r}, \overline{x}^{*r}
+\delta\overline{x}^{r}, \underline{u}^{*r}+\delta\underline{u}^{r},
\overline{u}^{*r}+\delta\overline{u}^{r}, t),\\ 
[\tilde{x}, \tilde{u}]^r
&=(\underline{x}^{*r}, \overline{x}^{*r}, 
\underline{u}^{*r}, \overline{u}^{*r}, t).
\end{split}
\end{equation*}
Without loss of generality, we consider
\begin{multline*}
\underline{\Delta\mathcal{J}}^r =\int_{a}^{b}
\underline{L}^r\bigl(\underline{x}^{*r}+\delta\underline{x}^{r}, \overline{x}^{*r}
+\delta\overline{x}^{r}, \underline{u}^{*r}+\delta\underline{u}^{r},
\overline{u}^{*r} +\delta\overline{u}^{r}, t\bigr) dt\\
-\int_{a}^{b} \underline{L}^r\left(\underline{x}^{*r},
\overline{x}^{*r}, \underline{u}^{*r}, \overline{u}^{*r}, t\right) dt
\end{multline*}
and
\begin{multline*}
\overline{\Delta\mathcal{J}}^r =\int_{a}^{b}
\overline{L}^r \bigl(\underline{x}^{*r}+\delta\underline{x}^{r}, \overline{x}^{*r}
+\delta\overline{x}^{r}, \underline{u}^{*r}+\delta\underline{u}^{r},
\overline{u}^{*r}+\delta\overline{u}^{r}, t\bigr) dt\\
-\int_{a}^{b} \overline{L}^r (\underline{x}^{*r},
\overline{x}^{*r}, \underline{u}^{*r}, \overline{u}^{*r}, t)dt.
\end{multline*}
If we evaluate the derivatives in the integrand along 
the optimal trajectory, then we arrive at
\begin{equation*}
\underline{\Delta\mathcal{J}}^r
=\int_{a}^{b}
\biggl[\frac{\partial \underline{L}^r}{\partial \underline{x}^r} \delta \underline{x}^r
+ \frac{\partial \underline{L}^r}{\partial \overline{x}^r} \delta \overline{x}^r
+ \frac{\partial \underline{L}^r}{\partial \underline{u}^r} \delta \underline{u}^r
+ \frac{\partial \underline{L}^r}{\partial \overline{u}^r} 
\delta  \overline{u}^r\biggr] dt + O((\delta  \underline{u}^{*r})^2) 
+ O((\delta  \overline{u}^{*r})^2)
\end{equation*}
and
\begin{equation*}
\overline{\Delta\mathcal{J}}^r
= \int_{a}^{b}
\biggl[\frac{\partial \overline{L}^r}{\partial \underline{x}^r} \delta \underline{x}^r
+ \frac{\partial \overline{L}^r}{\partial \overline{x}^r} \delta \overline{x}^r
+ \frac{\partial \overline{L}^r}{\partial \underline{u}^r} \delta \underline{u}^r
+ \frac{\partial \overline{L}^r}{\partial \overline{u}^r} \delta \overline{u}^r\biggr] dt
+ O((\delta  \underline{u}^{*r})^2) + O((\delta  \overline{u}^{*r})^2).
\end{equation*}
Since 
$\tilde{\mathcal{J}}(\tilde{x}^*,\tilde{u}^*)
\preceq\tilde{\mathcal{J}}(\tilde{x},\tilde{u})$ if and only if  
$\underline{\mathcal{J}}^r[\tilde{x}^*,
\tilde{u}^*]^r\leq\underline{\mathcal{J}}^r[\tilde{x},\tilde{u}]^r$  
and $\overline{\mathcal{J}}^r[\tilde{x}^*,
\tilde{u}^*]^r\leq\overline{\mathcal{J}}^r[\tilde{x},\tilde{u}]^r$  
for all $r\in[0,1]$, so $[\tilde{x}^*,\tilde{u}^*]^r$ is an optimal 
solution for the crisp functions $\underline{\mathcal{J}}^r$ 
and $\overline{\mathcal{J}}^r$. 
Let $\underline{\delta \mathcal{J}}^r$ and $\overline{\delta \mathcal{J}}^r$
denote the first variation. If $\underline{u}^{*r}$ and  $\overline{u}^{*r}$
are optimal, from the classical theory of optimal control,  
it is necessary that the first variation $\underline{\delta \mathcal{J}}^r$
and $\overline{\delta \mathcal{J}}^r$ are zero. Thus, on optimal trajectories, 
one has
\begin{equation*}
\underline{\delta \mathcal{J}}^r=\int_{a}^{b}
\left[\frac{\partial \underline{L}^r}{\partial \underline{x}^r} \delta \underline{x}^r
+ \frac{\partial \underline{L}^r}{\partial \overline{x}^r} \delta \overline{x}^r
+ \frac{\partial \underline{L}^r}{\partial \underline{u}^r} \delta \underline{u}^r
+ \frac{\partial \underline{L}^r}{\partial \overline{u}^r} 
\delta \overline{u}^r \right]dt =0
\end{equation*}
and
\begin{equation*}
\overline{\delta \mathcal{J}}^r=\int_{a}^{b}
\left[\frac{\partial \overline{L}^r}{\partial \underline{x}^r} \delta \underline{x}^r
+ \frac{\partial \overline{L}^r}{\partial \overline{x}^r} \delta \overline{x}^r
+ \frac{\partial \overline{L}^r}{\partial \underline{u}^r} \delta \underline{u}^r
+ \frac{\partial \overline{L}^r}{\partial \overline{u}^r} 
\delta \overline{u}^r\right] dt = 0
\end{equation*}
for all variations. Now, we simply need to introduce 
two Lagrange multipliers $p_1(t)$ and $p_2(t)$.
If $\tilde{x}$ is [(1)-gH]-differentiable, then  we consider the integrals
\begin{equation}
\label{equ08}
\underline{\phi}^r=\int_{a}^{b} p_1 \cdot
(_{a}^CD{_{t}^{\beta}} \underline{x}^r-\underline{\varphi}^r)  dt
\end{equation}
and
\begin{equation}
\label{equ09}
\overline{\phi}^r=\int_{a}^{b} p_2 \cdot (_{a}^CD{_{t}^{\beta}}
\overline{x}^r-\overline{\varphi}^r)  dt.
\end{equation}
If $\tilde{x}$ is [(2)-gH]-differentiable, then we consider the integrals
\begin{equation*}
\underline{\phi}^r=\int_{a}^{b}p_1
\cdot (_{a}^CD{_{t}^{\beta}} \underline{x}^r-\overline{\varphi}^r)  dt
\end{equation*}
and
\begin{equation*}
\overline{\phi}^r=\int_{a}^{b} p_2 \cdot (_{a}^CD{_{t}^{\beta}}
\overline{x}^r-\underline{\varphi}^r)  dt.
\end{equation*}
Let us assume [(1)-$gH$]-differentiablity of $\tilde{x}^*$.
The proof for the other case is completely similar, so it is here omitted.
We begin by computing the variation $\underline{\delta\phi}^r$ 
of functional \eqref{equ08}:
\begin{equation}
\label{equ10}
\begin{split}
\underline{\delta{\phi}}^r &= \int_{a}^{b} \delta p_1 (_{a}^CD{_{t}^{\beta}}
\underline{x}^r-\underline{\varphi}^r)\\
&\qquad + p_1 \biggl[\delta (_{a}^CD{_{t}^{\beta}} \underline{x}^r)
- \left(\frac{\partial\underline{\varphi}^r}{\partial\underline{x}^r} \delta
\underline{x}^r+\frac{\partial\underline{\varphi}^r}{\partial\overline{x}^r} \delta
\overline{x}^r+\frac{\partial\underline{\varphi}^r}{\partial\underline{u}^r} \delta
\underline{u}^r+\frac{\partial\underline{\varphi}^r}{\partial\overline{u}^r} \delta
\overline{u}^r\right)\biggr]dt\\
&= \int_{a}^{b} p_1 \biggl[\delta \left(_{a}^CD{_{t}^{\beta}} \underline{x}^r\right)
-\biggl( \frac{\partial\underline{\varphi}^r}{\partial\underline{x}^r} 
\delta \underline{x}^r
+\frac{\partial\underline{\varphi}^r}{\partial\overline{x}^r} \delta \overline{x}^r
+\frac{\partial\underline{\varphi}^r}{\partial\underline{u}^r} \delta \underline{u}^r
+\frac{\partial\underline{\varphi}^r}{\partial\overline{u}^r} 
\delta \overline{u}^r\biggr)\biggr]dt.
\end{split}
\end{equation}
Because $\tilde{x}(a)$ and $\tilde{x}(b)$ are specified, we have
$\delta\underline{x}^r(a)=\delta\underline{x}^r(b)
=\delta\overline{x}^r(a)=\delta\overline{x}^r(b)=0$.
Using fractional integration by parts \cite{book:IFCV}, 
equation \eqref{equ10} is equivalent to
\begin{equation*}
\underline{\delta{\phi}}^r=\int_{a}^{b} \left(_{t}D{_{b}^{\beta}}
p_1-p_1\frac{\partial\underline{\varphi}^r}{\partial\underline{x}^r}\right)
\delta \underline{x}^r
-\left(\frac{\partial\underline{\varphi}^r}{\partial\overline{x}^r}
\delta \overline{x}^r+\frac{\partial\underline{\varphi}^r}{\partial\underline{u}^r}
\delta \underline{u}^r+\frac{\partial\underline{\varphi}^r}{\partial\overline{u}^r}
\delta \overline{u}^r\right)p_1 dt
\end{equation*}
since $\underline{\phi}^r=0$ for all $\underline{u}^r$ and $\overline{u}^r$
and $\underline{\delta {\phi}}^r=0$. Therefore, 
the condition $\underline{\delta\mathcal{J}}^r=0$
can now be replaced by $\underline{\delta\mathcal{J}}^r+\underline{\delta {\phi}}^r=0$.
With substitutions of $\underline{\delta\mathcal{J}}^r$ 
and $\underline{\delta {\phi}}^r$, we have
\begin{equation}
\label{equ11}
\begin{split}
\int_{a}^{b} & \left(\frac{\partial\underline{L}^r}{\partial\underline{x}^r}
+_{t}D{_{b}^{\beta}} p_1-p_1\frac{\partial\underline{\varphi}^r}
{\partial\underline{x}^r}\right) \delta \underline{x}^r
+\left(\frac{\partial\underline{L}^r}{\partial\overline{x}^r}
-p_1\frac{\partial\underline{\varphi}^r}
{\partial\overline{x}^r}\right)\delta\overline{x}^r
+\left(\frac{\partial\underline{L}^r}{\partial\underline{u}^r}
-p_1\frac{\partial\underline{\varphi}^r}{\partial\underline{u}^r}\right)
\delta \underline{u}^r\\
&+\left(\frac{\partial\underline{L}^r}{\partial\overline{u}^r}
-p_1\frac{\partial\underline{\varphi}^r}{\partial\overline{u}^r}\right)
\delta \overline{u}^r dt=0.
\end{split}
\end{equation}
Now, following the scheme of obtaining Eq.~\eqref{equ11}, and adapting 
it to the case under consideration involving Eq.~\eqref{equ09}, 
the condition $\overline{\delta\mathcal{J}}^{r}=0$ can be replaced by
$\overline{\delta\mathcal{J}}^{r}+\overline{\delta\phi}^{r}=0$. So we have
\begin{equation}
\label{equ12}
\begin{split}
\int_{a}^{b} &\left(\frac{\partial\overline{L}^r}{\partial\underline{x}^r}
-p_2\frac{\partial\overline{\varphi}^r}{\partial\underline{x}^r}\right)\delta
\underline{x}^r 
+\left(\frac{\partial\overline{L}^r}{\partial\underline{x}^r}
+_{t}D{_{b}^{\beta}} p_2-p_2\frac{\partial\overline{\varphi}^r}
{\partial\overline{x}^r}\right) \delta \overline{x}^r 
+\left(\frac{\partial\overline{L}^r}{\partial\underline{u}^r}
-p_2\frac{\partial\overline{\varphi}^r}{\partial\underline{u}^r}\right)
\delta \underline{u}^r\\
&+\left(\frac{\partial\overline{L}^r}{\partial\overline{u}^r}
-p_2\frac{\partial\overline{\varphi}^r}{\partial\overline{u}^r}\right)
\delta \overline{u}^r dt=0.
\end{split}
\end{equation}
If we use the Hamiltonian function as in \eqref{eq:Hamilt:01},
then by summing Eqs.~\eqref{equ11} and \eqref{equ12} we arrive at
\begin{equation*}
\int_{a}^{b} \left({_{t}D{_{b}^{\beta}}} p_1
-\frac{\partial \mathcal{H}}{\partial\underline{x}^r}\right)\delta\underline{x}^r
+ \left({_{t}D{_{b}^{\beta}}} p_2
-\frac{\partial \mathcal{H}}{\partial\overline{x}^r}\right)\delta\overline{x}^r
-\frac{\partial \mathcal{H}}{\partial\underline{u}^r}\delta\underline{u}^r
-\frac{\partial \mathcal{H}}{\partial\overline{u}^r}\delta\overline{u}^r dt=0.
\end{equation*}
The intended necessary conditions follow.
\end{proof}

A pair $(\tilde{x}^*, \tilde{u}^*)$ satisfying Theorem~\ref{the001}
is said to be an extremal for problem \eqref{f1}.


\section{Illustrative examples}
\label{4}

In this section, we apply the necessary conditions of Pontryagin type
given by Theorem~\ref{the001} to three fuzzy optimal control problems.


\subsection{A non-autonomous fuzzy fractional optimal control problem}

We begin with a non-autonomous fuzzy fractional optimal control problem.

\begin{example}
\label{example}
Consider the following problem:
\begin{equation}
\label{104}
\begin{gathered}
\int_{1}^{2} \tilde{u}^2 (t)dt \longrightarrow \min, \\
^{gH-C}_{\qquad 1}\mathcal{D}{_{t}^{\beta}}\tilde{x} (t) 
= (2t-1)   \tilde{x}(t) \ominus_{gH} \sin(t) \tilde{u}(t),\\
\tilde{x}(1)= (0,1,2), \quad \tilde{x}(2)=(-2,-1,1).
\end{gathered}
\end{equation}
We assume that $(2t-1)\tilde{x}(t) \ominus_{gH} \sin(t) \tilde{u}(t)$ 
exists and
$$
[(2t-1)\tilde{x}(t) \ominus_{gH}\sin(t)\tilde{u}(t)]^r
=[(2t-1)\underline{x}^r-\sin(t) \underline{u}^r,(2t-1)\overline{x}^r
-\sin(t) \overline{u}^r].
$$
Using Theorem~\ref{the001}, we consider two cases
to obtain the extremals of \eqref{104}. 

(i) Suppose that $\tilde{x}$ is a $[(1)-gH]_{\beta}^{C}$-differentiable 
function. Then, the Hamiltonian is given by
$$
\mathcal{H}=-((\underline{u}^r)^2+(\overline{u}^r)^2)+p_1((2t-1)\underline{x}^r
-\sin(t)\underline{u}^r)+p_2((2t-1)\overline{x}^r-\sin(t)\overline{u}^r).
$$
The optimality conditions of Theorem~\ref{the001}, the initial conditions, 
and the control system of \eqref{104} assert that
\begin{equation}
\label{l07}
\begin{gathered}
\begin{cases}
_{t}D{_{2}^{\beta}}p_1(t)=(2t-1)p_1(t),\\
_{t}D{_{2}^{\beta}}p_2(t)=(2t-1)p_2(t),\\
{\underline{u}}^r(t)= -\frac{p_1(t) \sin(t)}{2},\\
{\overline{u}}^r(t)= -\frac{p_2(t) \sin(t)}{2},
\end{cases}
\quad
\begin{cases}
_{1}D{_{t}^{\beta}}{\underline{x}}^r(t)
= (2t-1)\underline{x}^r(t)-\sin(t)\underline{u}^r(t),\\
_{1}D{_{t}^{\beta}}{\overline{x}}^r(t)
=(2t-1)\overline{x}^r(t)-\sin(t)\overline{u}^r(t),
\end{cases}\\
\begin{cases}
\underline{x}^r(1)= r,\\
\overline{x}^r(1)=2-r,\\
\underline{x}^r(2)=-2+r,\\
\overline{x}^r(2)=1-2r.
\end{cases}
\end{gathered}
\end{equation}
Note that it is difficult to solve the above fractional equations 
to get the extremals. For $0<\beta<1$, a numerical method should 
be used \cite{book:CMFCV,MR3381791}. When $\beta$ goes to 1, 
problem \eqref{104} reduces to
\begin{equation}
\label{105}
\begin{gathered}
\int_{1}^{2} \tilde{u}^2 (t)dt \longrightarrow \min, \\
\mathcal{D}_{gH}\tilde{x} (t) = (2t-1) 
\odot \tilde{x}(t) \ominus_{gH} \sin(t) \tilde{u}(t),\\
\tilde{x}(1)= (0,1,2), \quad \tilde{x}(2)=  (-2,-1,1).
\end{gathered}
\end{equation}
The extremals for \eqref{105} are obtained from \eqref{l07} 
by considering $\beta\rightarrow 1$:
\begin{equation}
\label{l08}
\begin{gathered}
\begin{cases}
\dot{p}_1(t)=(1-2t){p}_1(t),\\
\dot{p}_2(t)=(1-2t){p}_2(t),\\
{\underline{u}}^r(t)= -\frac{{p}_1(t) \sin(t)}{2},\\
{\overline{u}}^r(t)= -\frac{{p}_2(t) \sin(t)}{2},
\end{cases}
\quad
\begin{cases}
\dot{\underline{x}}^r(t)
= (2t-1)\underline{x}^r(t)-\sin(t)\underline{u}^r(t),\\
\dot{\overline{x}}^r(t)
= (2t-1)\overline{x}^r(t)-\sin(t)\overline{u}^r(t),
\end{cases}\\
\begin{cases}
\underline{x}^r(1)= r,\\
\overline{x}^r(1)=2-r,\\
\underline{x}^r(2)=-2+r,\\
\overline{x}^r(2)=1-2r.
\end{cases}
\end{gathered}
\end{equation}
We solved \eqref{l08} numerically, with {\sc Matlab}'s built-in 
solver \texttt{bvp4c}. Figure~\ref{fig005} shows the control 
and state extremals, where the solid lines in the center 
corresponds to $r=1$, the dashed lines are the upper bounds 
and the doted lines are the lower bounds, for both fuzzy
control and state functions, correspondent to $r=0$.
\begin{figure}[tbp]
\label{03}
\centering
\subfloat[Fuzzy control extremal]{\label{fig005:a}
\includegraphics[scale=0.42]{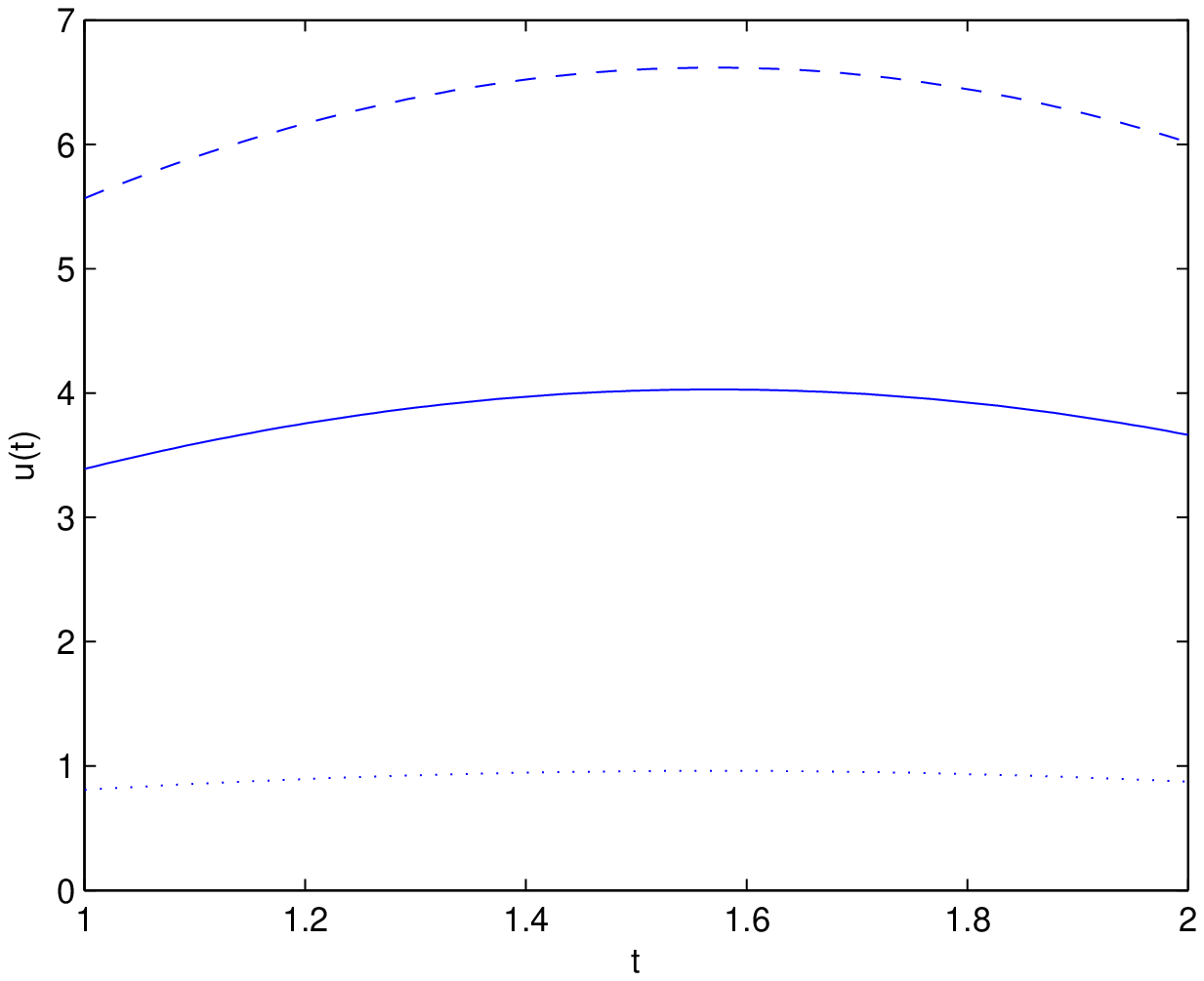}}
\subfloat[Fuzzy state extremal]{\label{fig005:b}
\includegraphics[scale=0.42]{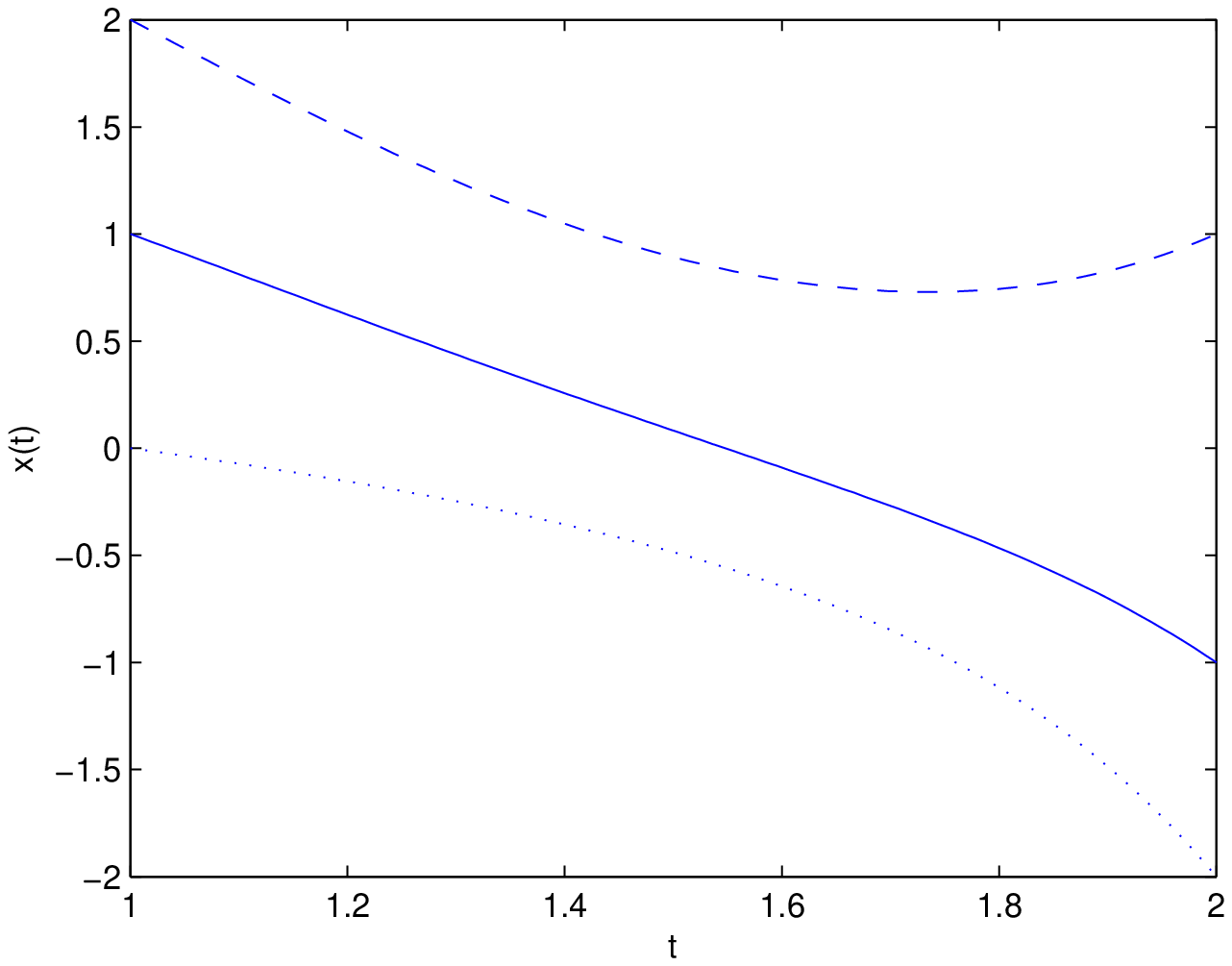}}
\caption{The fuzzy extremals for the fuzzy optimal control 
problem \eqref{105} of Example~\ref{example} under
$[(1)-gH]_{\beta}^{C}$-differentiability 
of $\tilde{x}$.}\label{fig005}
\end{figure}

(ii) Suppose now that $\tilde{x}$ is $[(2)-gH]_{\beta}^{C}$-differentiable. 
This leads to
$$
\mathcal{H}=-((\underline{u}^r)^2+(\overline{u}^r)^2)
+p_1((2t-1)\overline{x}^r-\sin t \overline{u}^r)
+p_2((2t-1)\underline{x}^r-\sin t \underline{u}^r).
$$
The optimality conditions of Theorem~\ref{the001}, 
the initial conditions and the control system assert that
\begin{equation}
\label{l007}
\begin{gathered}
\begin{cases}
_{t}D{_{2}^{\beta}}p_1(t)=(2t-1)p_2(t),\\
_{t}D{_{2}^{\beta}}p_2(t)=(2t-1)p_1(t),\\
{\underline{u}}^r(t)= -\frac{p_2(t) \sin(t)}{2},\\
{\overline{u}}^r(t)= -\frac{p_1(t) \sin(t)}{2},
\end{cases}
\quad
\begin{cases}
_{1}D{_{t}^{\beta}}{\underline{x}}^r
= (2t-1)\overline{x}^r-\sin(t)\overline{u}^r,\\
_{1}D{_{t}^{\beta}}{\overline{x}}^r
= (2t-1)\underline{x}^r-\sin(t)\underline{u}^r,\\
\end{cases}\\
\begin{cases}
\underline{x}^r(1)= r,\\
\overline{x}^r(1)=2-r,\\
\underline{x}^r(2)=-2+r,\\
\overline{x}^r(2)=1-2r.
\end{cases}
\end{gathered}
\end{equation}
Note that it is difficult to solve the above fractional equations 
to get the extremals. For $0<\beta<1$, a numerical method should be used. 
When $\beta$ goes to 1, problem \eqref{104} reduces to problem \eqref{105}.
The extremals for \eqref{105} are obtained from \eqref{l007} 
and considering $\beta\rightarrow 1$:
\begin{equation}
\label{l008}
\begin{cases}
\dot{p}_1(t)=(1-2t){p}_2(t),\\
\dot{p}_2(t)=(1-2t){p}_1(t),\\
{\underline{u}}^r(t)= -\frac{{p}_2(t) \sin(t)}{2},\\
{\overline{u}}^r(t)= -\frac{{p}_1(t) \sin(t)}{2},
\end{cases}
\ \ 
\begin{cases}
\dot{\underline{x}}^r= (2t-1)\overline{x}^r-\sin(t)\overline{u}^r,\\
\dot{\overline{x}}^r= (2t-1)\underline{x}^r-\sin(t)\underline{u}^r,
\end{cases}
\ \ 
\begin{cases}
\underline{x}^r(1)= r,\\
\overline{x}^r(1)=2-r,\\
\underline{x}^r(2)=-2+r,\\
\overline{x}^r(2)=1-2r.
\end{cases}
\end{equation}
Similarly as before, we solved \eqref{l008} with {\sc Matlab}'s 
built-in solver \texttt{bvp4c}.  Figure~\ref{fig0005} shows the graphic
of the control and  state extremals, where the solid lines at the center correspond
to $r=1$, the dashed lines are the upper bounds, 
and the doted lines are the lower bounds for fuzzy
control and state functions for $r=0$.
\begin{figure}[tbp]
\label{003}
\centering
\subfloat[Fuzzy control extremal]{\label{fig0005:a}
\includegraphics[scale=0.42]{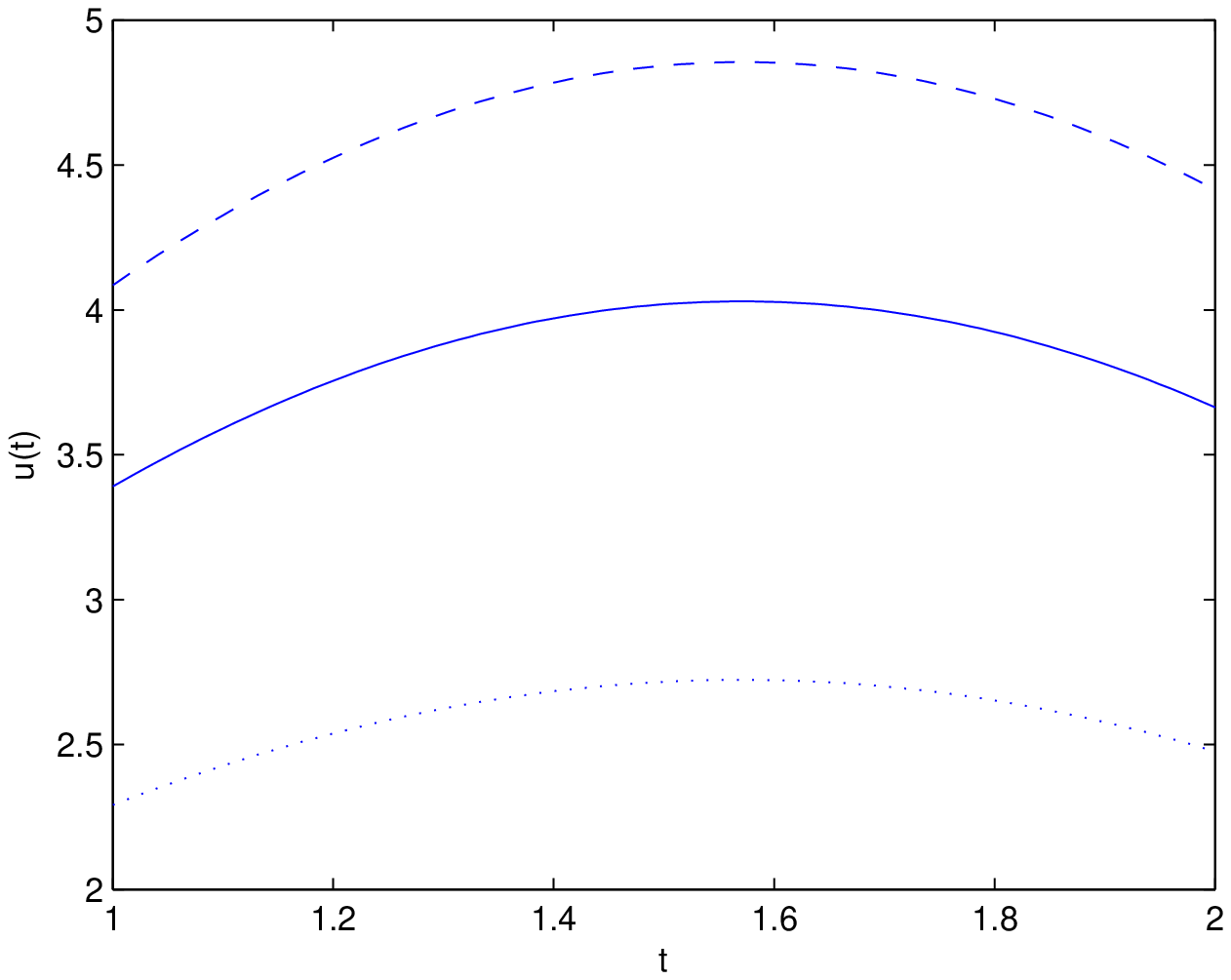}}
\subfloat[Fuzzy state extremal]{\label{fig0005:b}
\includegraphics[scale=0.42]{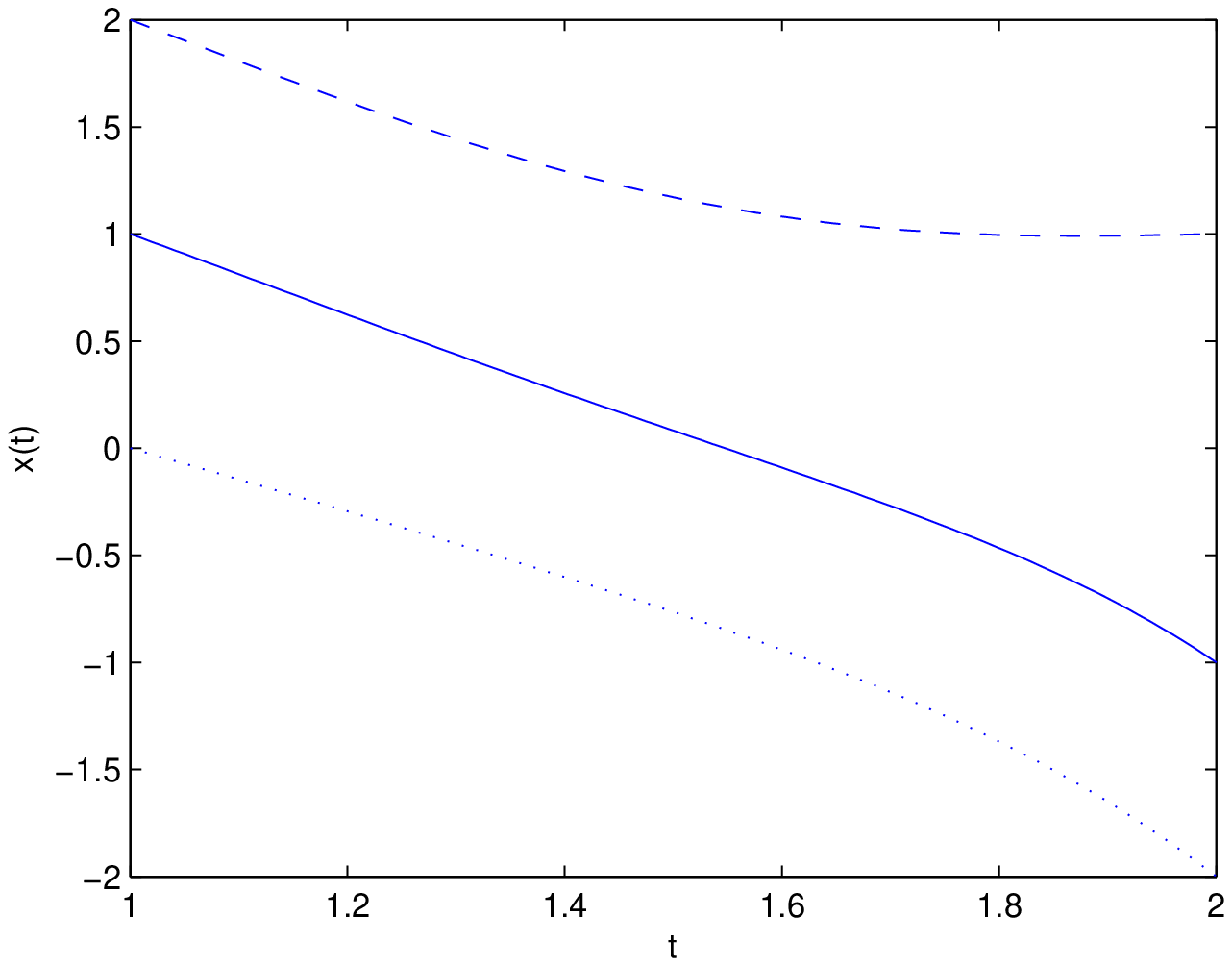}}
\caption{The fuzzy extremals for the fuzzy
optimal control problem \eqref{105} of Example~\ref{example} 
under $[(2)-gH]$-differentiability of $\tilde{x}$.}\label{fig0005}
\end{figure}
Comparing Figures~\ref{fig005} and \ref{fig0005}, we see 
that using $[(2)-gH]$-differentiability of $\tilde{x}$ 
the length of support of $\tilde{u}(t)$ is decreasing.
\end{example}


\subsection{On two examples of Najariyan and Farahi}

In the recent paper \cite{farahi}, Najariyan and Farahi characterize extremals 
for fuzzy linear time-invariant (autonomous) optimal control systems. 
Precisely, they investigate a method for solving the following time-invariant 
fuzzy optimal control problem:
\begin{equation}
\label{d2}
\begin{gathered}
\int_{a}^{b}\tilde{u}^2(t)dt\rightarrow \min,\\
 \mathcal{D}_{gH}\tilde{x}(t)=A\odot \tilde{x}(t)+ C\odot\tilde{u}(t),\\
\tilde{x}(a)=\tilde{x}_{a}, \quad \tilde{x}(b)=\tilde{x}_{b}.
\end{gathered}
\end{equation}
Main result of \cite{farahi} asserts that the fuzzy optimal control problem \eqref{d2}  
is equivalent to the crisp complex optimal control system
\begin{equation}
\label{d3} 
\begin{gathered}
\int_{a}^{b} \left((\underline{u}^r(t))^2
+ i(\overline{u}^r(t))^2\right)dt\rightarrow \min,\\
\dot{\underline{x}^r}(t)+i \dot{\overline{x}^r}(t) =B(\underline{x}^r(t)+i\overline{x}^r(t))+D(\underline{u}^r(t)+i\overline{u}^r(t)),\\
{\underline{x}^r}(a)+i{\overline{x}^r}(a)={\underline{x}^r}_a+i{\overline{x}^r}_a,
\quad {\underline{x}^r}(b)+i{\overline{x}^r}(b)={\underline{x}^r}_b+i{\overline{x}^r}_b, 
\end{gathered}
\end{equation}
where the elements of the matrices $B$ and $D$ are determined 
from those of $A$ and $C$ as follows:
\[
b_{ij}=
\begin{cases}
ea_{ij}  & \text{ if } a_{ij}\geq 0,\\
ga_{ij}  &  \text{ if } a_{ij}< 0,
\end{cases}
\quad
d_{ij}=
\begin{cases}
ec_{ij} &  \text{ if } c_{ij}\geq 0,\\
gc_{ij} & \text{ if }  c_{ij} < 0,
\end{cases}
\]
with $e:a+bi\rightarrow a+bi$ and $g:a+bi\rightarrow b+ai$.
The extremals for the crisp optimal control problem \eqref{d3}  
are given by the classical PMP \cite{Pontryagin}.

\begin{example}[Example 4.2 of Najariyan and Farahi \cite{farahi}]
\label{d5}
Consider the following problem:
\begin{equation}
\label{d07}
\begin{gathered}
\int_{0}^{1} \tilde{u} ^{2}(t)dt \longrightarrow \min, \\
\begin{cases}
\mathcal{D}_{gH}\tilde{x}_1 (t) = -2   \tilde{x}_2(t) +   \tilde{u}(t),\\
\mathcal{D}_{gH}\tilde{x}_2 (t) = 2   \tilde{x}_1(t),
\end{cases}\\
\tilde{x}_1(0)= \tilde{x}_2(0)= (1,2,3),\\
 \tilde{x}_1(1)=  \tilde{x}_2(1)= (-0.5,0,0.5).
\end{gathered}
\end{equation}
In \cite{farahi} the authors provide a figure (see \cite[Figure~2]{farahi}) 
with what they claim to be the fuzzy control and state extremals 
for problem \eqref{d07}. It turns out that the provided functions
are not extremals for the optimal control problem \eqref{d07}.
Indeed, in the crisp case, i.e., when the variables $\tilde{x}_1 (t)$,
$\tilde{x}_2 (t)$ and $\tilde{u}(t)$ and $\tilde{2}=(1,2,3)$ 
and $\tilde{0}=(-0.5,0,0.5)$ are crisp quantities, 
the fuzzy optimal control problem \eqref{d07} 
is transformed into the following crisp optimal control problem:
\begin{equation}
\label{prb:ex02}
\begin{gathered}
\int_{0}^{1}  u^{2}(t) dt \longrightarrow \min, \\
\begin{cases}
\dot{x}_1 (t) = -2 x_2(t) +   u(t),\\
\dot{x}_2 (t) = 2   x_1(t),
\end{cases}\\
x_1(0)= x_2(0)= 2,\\ x_1(1)=  x_2(1)= 0.
\end{gathered}
\end{equation}
The extremals for \eqref{prb:ex02} are easily obtained from 
the classical PMP \cite{Pinch,Pontryagin}. 
Figure~\ref{fig4} shows the graphics of the control and state extremals 
for problem \eqref{prb:ex02}. Comparing these functions with the ones 
given in \cite[Example 4.2]{farahi}, one may conclude that 
there is an inconsistency in \cite[Example 4.2]{farahi}.
\begin{figure} 
\label{001}
\centering
\subfloat[The control extremal $u(t)$]{\label{fig4:a}
\includegraphics[scale=0.42]{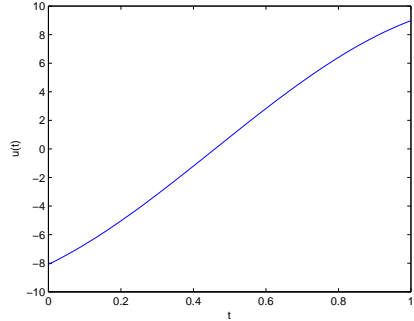}}\\
\subfloat[The state extremal $x_1(t)$]{\label{fig4:b}
\includegraphics[scale=0.42]{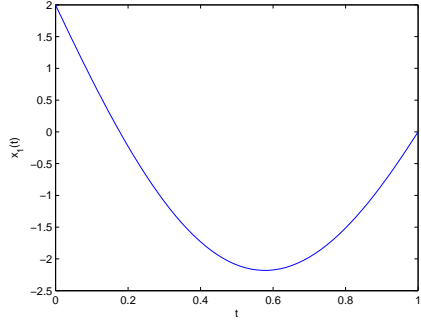}}
\subfloat[The state extremal $x_2(t)$]{\label{fig4:c}
\includegraphics[scale=0.42]{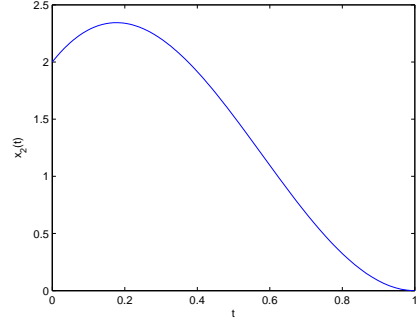}}
\caption{The extremals for the crisp 
optimal control problem \eqref{prb:ex02}
of Example~\ref{d5}.}
\label{fig4}
\end{figure}
Let us use Theorem~\ref{the001} to obtain the  
extremals for \eqref{d07}. Suppose that $\tilde{x}_1$ 
is a [(1)-gH]-differentiable function and  
$\tilde{x}_2$ is a [(2)-gH]-differentiable function. 
The analysis of the other three cases are similar 
and are left to the reader. Our assumption leads to
\begin{equation*}
\mathcal{H}=-((\underline{u}^r)^{2}+(\overline{u}^r)^{2})
+p_1(-2\overline{x}_2^r+\underline{u}^r)
+p_2(-2\underline{x}_2^r+\overline{u}^r)
+p_3(2\overline{x}_1^r)+p_4(2\underline{x}_1^r).
\end{equation*}
From the optimality conditions of Theorem~\ref{the001}, 
the initial conditions and the control system of problem \eqref{d07},
and considering $\beta=1$, we obtain that
\begin{equation}
\label{eq:CO:ex:2}
\begin{cases}
\dot{p}_1(t)=-2{p}_4(t),\\
\dot{p}_2(t)=-2p_3(t),\\
\dot{p}_3(t)=2{p}_2(t),\\
\dot{p}_4(t)=2p_1(t),\\
{\underline{u}}^r(t)= \frac{p_1(t)}{2},\\
{\overline{u}}^r(t)= \frac{p_2(t)}{2},
\end{cases}
\ 
\begin{cases}
\dot{\underline{x}}_1^r(t)= -2\overline{x}_2^r(t)+ \underline{u}^r(t),\\
\dot{\overline{x}}_1^r(t)= -2\underline{x}_2^r(t)+ \overline{u}^r(t),\\
\dot{\underline{x}}_2^r(t)=2\overline{x}_1^r(t),\\
\dot{\overline{x}}_2^r(t)= 2\underline{x}_1^r(t),
\end{cases}
\ 
\begin{cases}
\underline{x}_1^r(0)= \underline{x}_2^r(0)=r+1,\\
\overline{x}_1^r(0)= \overline{x}_2^r(0)=3-r,\\
\underline{x}_1^r(1)= \underline{x}_2^r(1)=-0.5+0.5r,\\
\overline{x}_1^r(1)= \overline{x}_2^r(1)=0.5-0.5r.
\end{cases}
\end{equation}
By solving \eqref{eq:CO:ex:2}, the control and state extremals
can be found straightforwardly. Figure~\ref{fig5} shows the graphics
of the fuzzy control and state extremals, where the continuous lines 
in the center correspond to $r=1$.
\begin{figure}[tbp]
\label{02}
\centering
\subfloat[The fuzzy control extremal $u(t)$]{\label{fig5:a}
\includegraphics[scale=0.42]{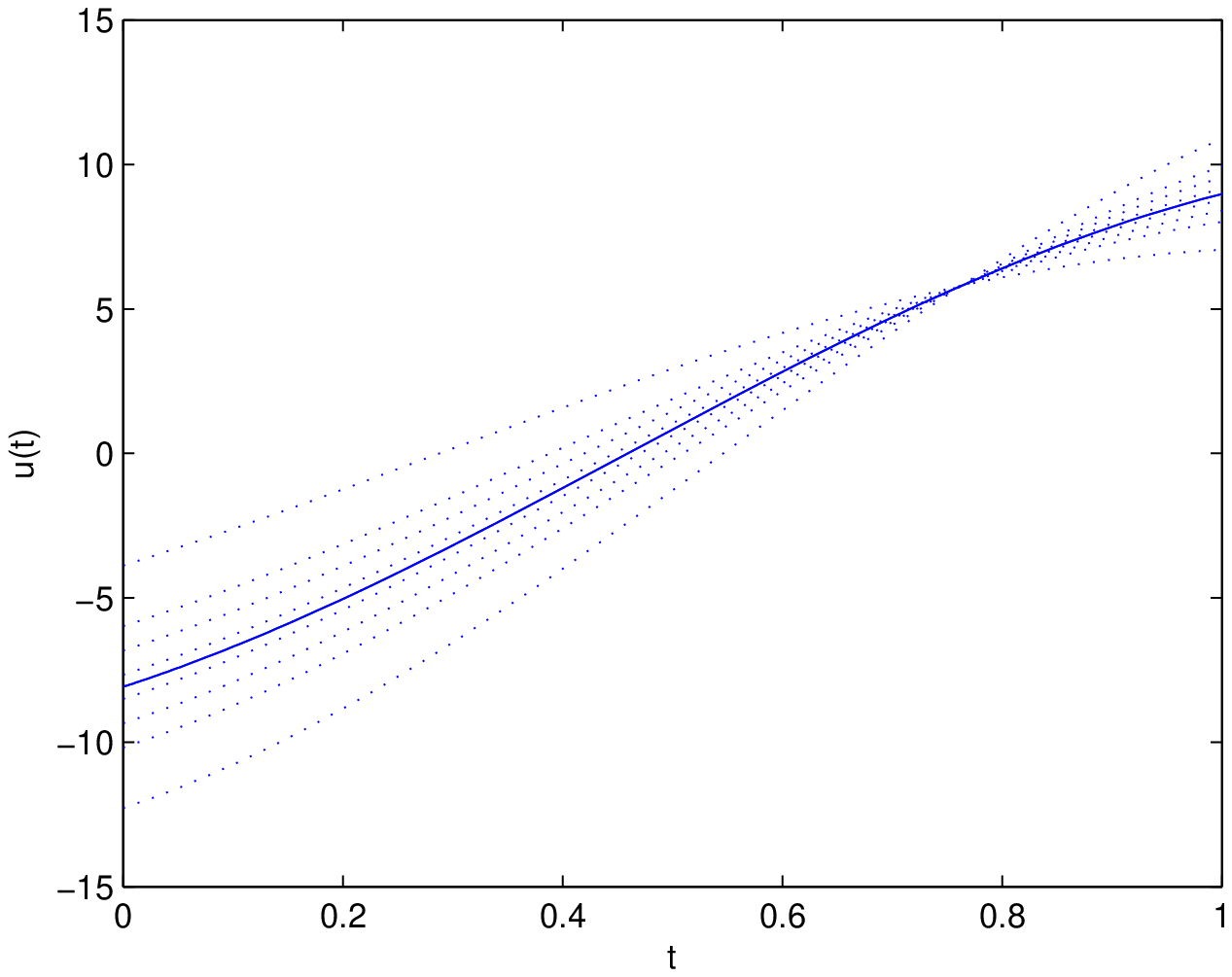}}\\
\subfloat[The fuzzy state extremal $x_1(t)$]{\label{fig5:b}
\includegraphics[scale=0.42]{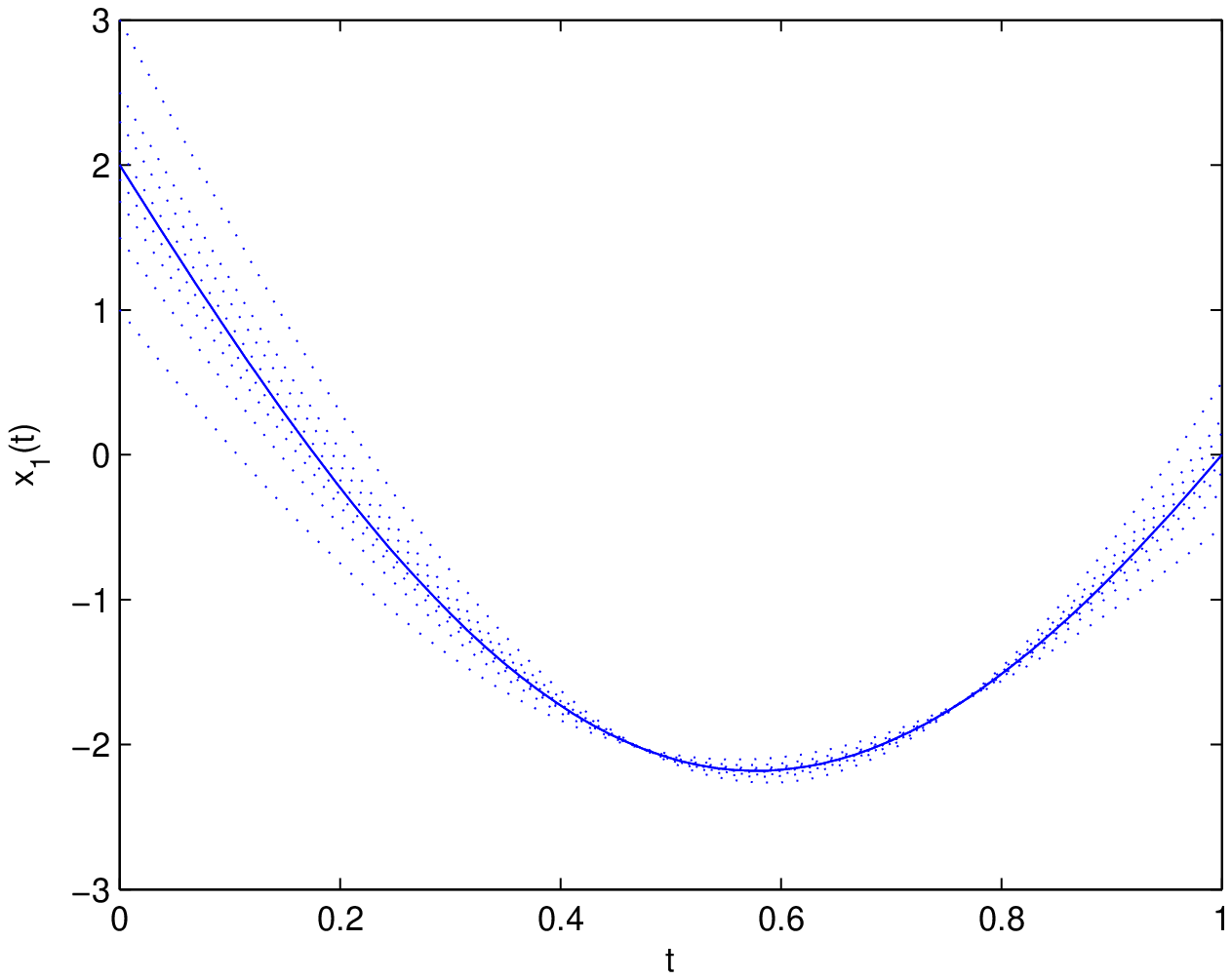}}
\subfloat[The fuzzy state extremal $x_2(t)$]{\label{fig5:c}
\includegraphics[scale=0.42]{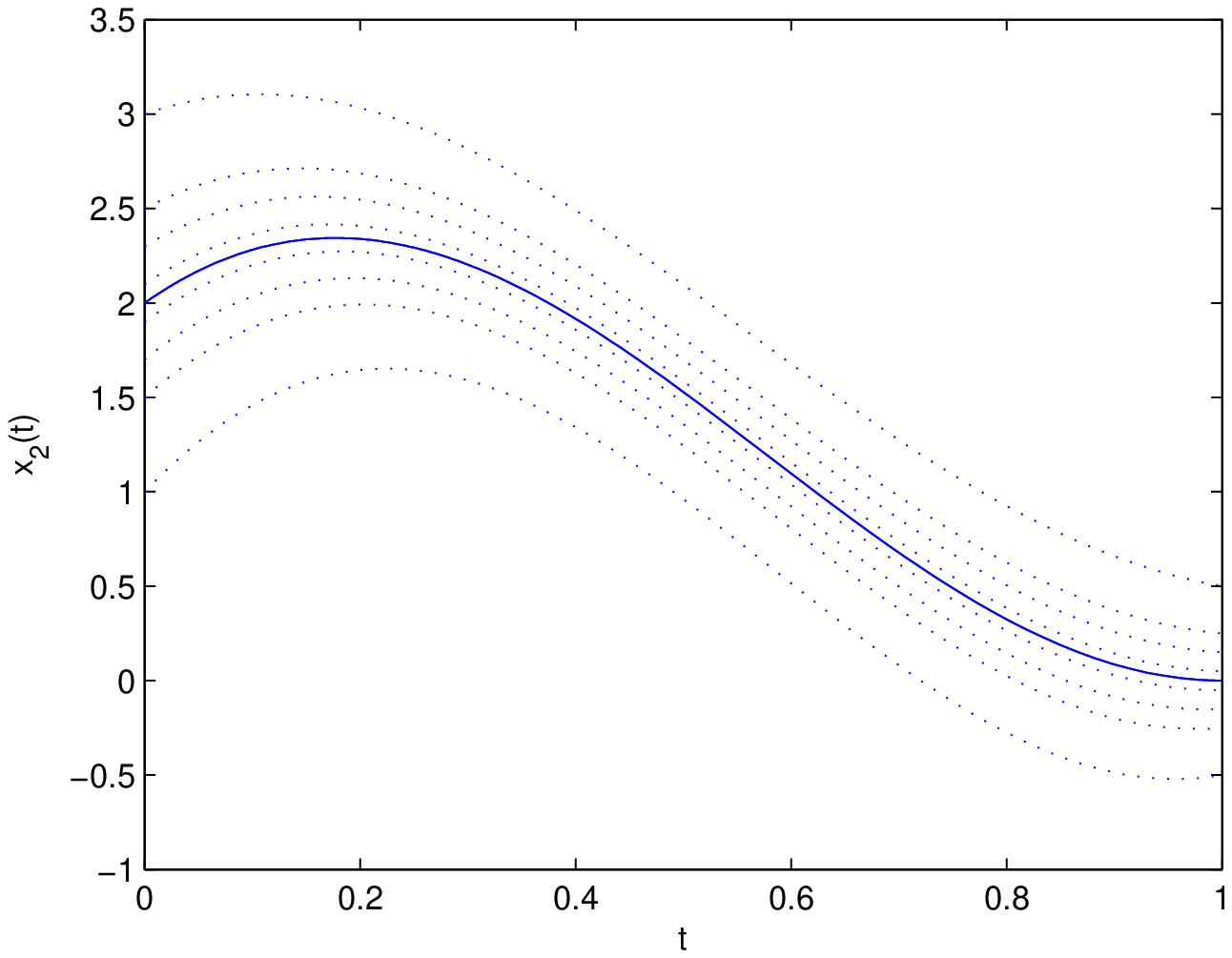}}
\caption{The fuzzy extremals for the fuzzy
optimal control problem \eqref{d07}
of Example~\ref{d5}.}\label{fig5}
\end{figure}
We clearly see from Figures~\ref{fig4} and \ref{fig5}
that the fuzzy extremals of the time-invariant linear  
optimal control problem \eqref{d07} are related with 
the extremals of the crisp optimal control problem 
\eqref{prb:ex02}, which is in agreement 
with the results of \cite{farahi}.
\end{example}

In \cite{farahi2}, Najariyan and Farahi also propose
a method to find extremals for linear non-autonomous 
fuzzy optimal control problems with fuzzy boundary conditions. 
Here we show that fuzzy minimizers 
for \cite[Example~3]{farahi2} do not exist.

\begin{example}[Example~3 of Najariyan and Farahi \cite{farahi2}]
\label{d05}
Consider the following problem:
\begin{equation}
\label{d007}
\begin{gathered}
\int_{0}^{2} \tilde{u} (t)\odot \tilde{u} (t)dt \longrightarrow \min, \\
\mathcal{D}_{gH}\tilde{x} (t) = (2t-1) \odot \tilde{x}(t) + \sin(t) \tilde{u}(t),\\
\tilde{x}(0)= (1,2,3), \quad \tilde{x}(2)=  (-1,0,1).
\end{gathered}
\end{equation}
In \cite[Example~3]{farahi2}, the authors claim to have found the 
control and state extremals for problem \eqref{d007} 
(cf. \cite[Figure~3]{farahi2}). Here we show that in fact the fuzzy state 
and control minimizers do not exist for problem \eqref{d007}. 
Since $diam(\tilde{x}(0))=diam(\tilde{x}(2))=2-2r$, 
we have $\tilde{x}(t)=\tilde{x}(0)+\tilde{f}(t)$, 
where $diam(\tilde{f}(t))=0$. Hence, $D_{gH} \tilde{x}(t)
=D_{gH} \tilde{f}(t)$, that is, $(2t-1) \odot  \tilde{x}(t) 
+ \sin(t) \tilde{u}(t)=D_{gH} \tilde{f}(t)$. 
Consequently, $diam((2t-1) \odot  \tilde{x}(t) 
+ \sin(t)\tilde{u}(t))=0$. Then, 
$\sin t(\overline{u}^r-\underline{u}^r)+(2t-1)(\overline{x}^r
-\underline{x}^r)=0$ for every $t\in[0,2]$ and $r\in[0,1]$. 
Since $\sin(t)>0$ and $2t-1>0$ for $t\in(\frac{1}{2},2]$, we arrive at
$(\overline{u}^r-\underline{u}^r)=(\overline{x}^r-\underline{x}^r)=0$. 
Therefore, $diam(\tilde{x}(t))=diam(\tilde{u}(t))=0$, which 
is impossible, and the fuzzy state and control minimizers do not exist. 
Moreover, note that, in the crisp case, that is, when the variables 
$\tilde{x} (t)$ and $\tilde{u}(t)$ and  
$\tilde{2}=(1,2,3)$ and $\tilde{0}=(-1,0,1)$ are crisp  quantities, 
the fuzzy optimal control problem \eqref{d007} is transformed 
into the following crisp optimal control problem:
\begin{equation}
\label{prb:ex002}
\begin{gathered}
\int_{0}^{2}  u^{2}(t) dt \longrightarrow \min, \\
\dot{x}(t) =(2t-1) x(t) +  \sin(t) u(t),\\
x(0)= 2, \quad x(2)= 0.
\end{gathered}
\end{equation}
The extremals for \eqref{prb:ex002} are easily obtained from 
the classical PMP \cite{Pontryagin}. Figure~\ref{fig2} 
plots the control and state extremals for problem \eqref{prb:ex002}. 
We conclude that the extremals of the crisp optimal control problem 
are not necessarily solution to the original fuzzy  
optimal control problem when $r=1$. This gives new insights
to the results of \cite{farahi2}. 
\begin{figure} 
\label{0001}
\centering
\subfloat[The control extremal]{\label{prb:ex002:a}
\includegraphics[scale=0.42]{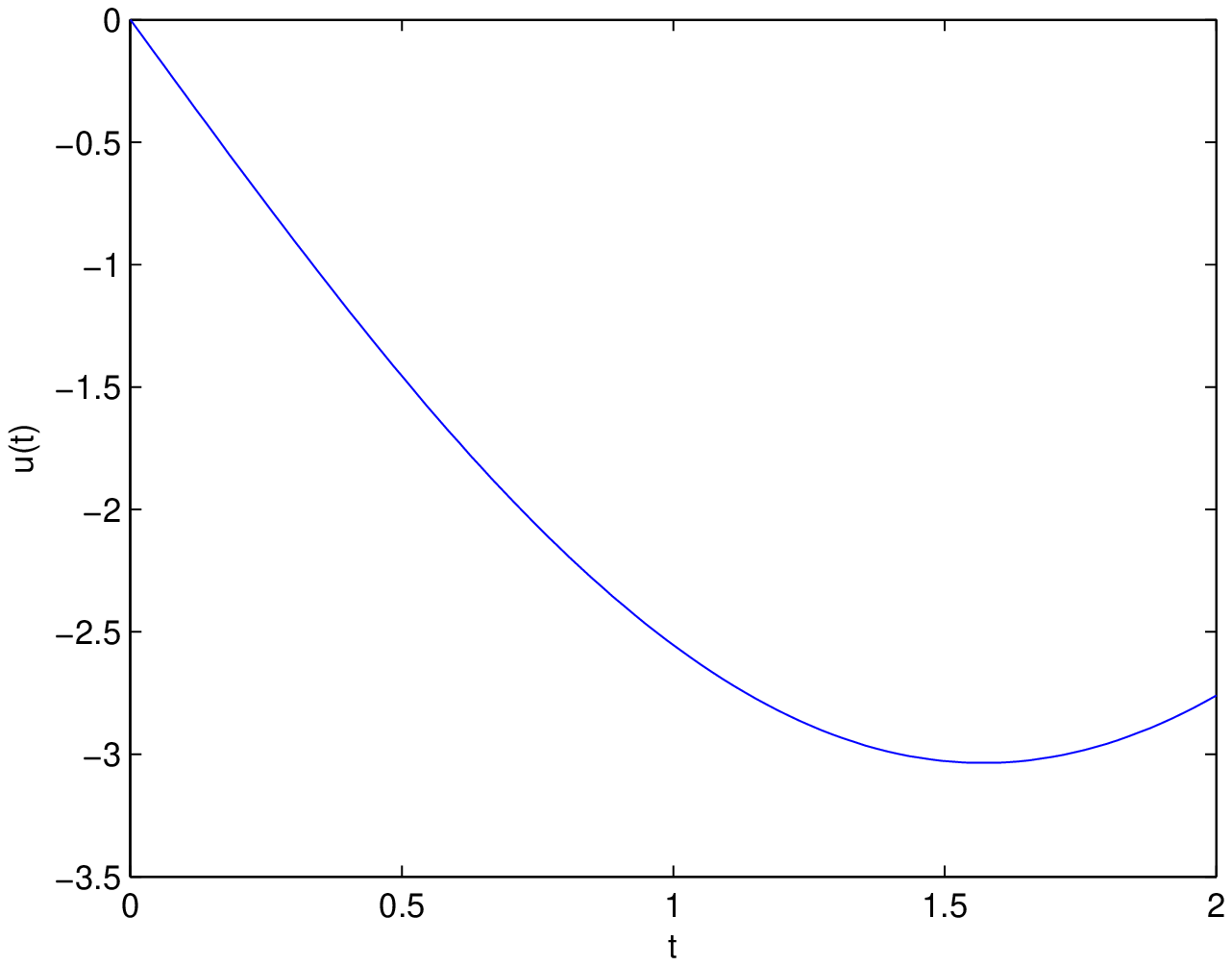}}
\subfloat[The state extremal]{\label{prb:ex002:b}
\includegraphics[scale=0.42]{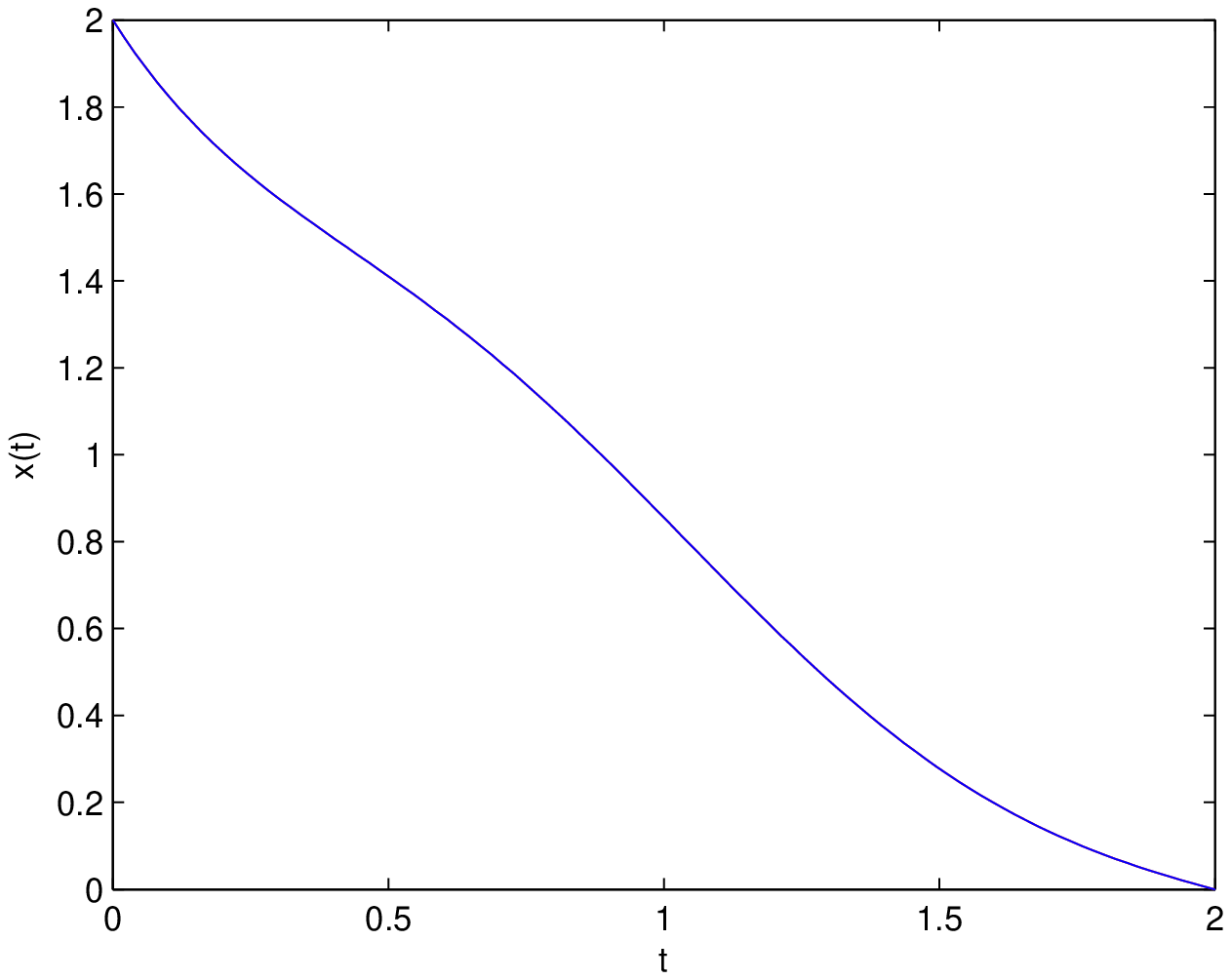}}
\caption{The extremals for the crisp 
optimal control problem \eqref{prb:ex002}
of Example~\ref{d05}.}
\label{fig2}
\end{figure}
\end{example}


\section{Conclusion}
\label{5}

In this paper, a novel technique has been presented 
to solve a class of fuzzy fractional optimal control problems, 
where the coefficients of the system can be time-dependent.
More precisely, we established a weak Pontryagin Maximum 
Principle (PMP) for fuzzy fractional optimal control problems 
depending on generalized Hukuhara fractional Caputo derivatives
(Theorem~\ref{the001}). The results improve those of \cite{farahi,farahi1}, 
where fuzzy optimal controls subject to time-invariant control systems 
are considered. See also Remarks~\ref{remark1} and \ref{remark2}, 
showing that our result easily generalizes those previously obtained 
in \cite{fard2,Farhadinia1}. The main features of our optimality 
conditions were summarized and highlighted with three illustrative examples. 
Two of the examples give interesting insights to the results 
of \cite{farahi,farahi2}.

We have just discussed necessary optimality conditions. 
Much remains to be done and we end by mentioning some possible
lines of research. The obtained fuzzy fractional optimality 
conditions are, in general, difficult to solve and 
it would be good to develop specific numerical methods 
to address the issue. To obtain second order necessary 
optimality conditions is presently a big challenge. 
Other open lines of research consist to prove 
sufficient optimality conditions and existence results.
While here we have assumed that the optimal solution exists, 
and necessary optimality conditions have been obtained 
under such assumption, as Example~\ref{d05} shows,
this is not always the case. As future work, we intend 
to prove conditions assuring the existence of optimal solutions 
to fuzzy fractional optimal control problems. 


\section*{Acknowledgements}

The authors are grateful to Catherine Choquet and to an anonymous
Referee for their helpful and constructive suggestions.



\medskip

Submitted Jun 6, 2016; revised Nov 27, 2016; accepted Feb 3, 2017. 

\medskip


\end{document}